\newtheorem{theorem}{Theorem}
\newtheorem{lemma}{Lemma}
\newtheorem{corollary}{Corollary}
\def\R{{\mathbb{R}}}
\def\E{{\mathbb{E}}}
\def\ci{{\mathbf{c}}}
\def\Prob{{\mathbf{Prob}}}
\def\P{{\cal{P}}}
\def\CP{{\cal{CP}}}
\def\N{{\rm{WNC}}}
\def\C{{\rm{WCC}}}
\def\B{{\rm{Ball}}}
\def\Vol{{\mbox{The volume of }}}
\def\Vs{{\rm{Vs}}}
\def\NA{{\rm{SNC}}}
\def\CA{{\rm{SCC}}}
\def\LNA{{\rm{SNLP}}}
\def\LCA{{\rm{SCLP}}}
\begin{document}

\title{Stochastic Vs Worst-case Condition Numbers}

\author{Dennis Cheung and Lisa H.Y. Zhou\\
United International College\\
Tang Jia Wan\\
Zhuhai, Guandong Province\\
P.R. of CHINA\\
e-mail:  \texttt{dennisc@uic.edu.hk and lisazhou@uic.edu.hk}}

\maketitle
\begin{abstract}
We compare Stochastic and Worst-case condition numbers and loss of
precision for general computational problems. We show an upper bound
for the ratio of Worst-case condition number to the Stochastic
condition number of order $O(\sqrt m)$. We show an upper bound for
the difference between the Worst-case loss of precision and the
Stochastic loss of precision of order $O(\ln m)$. The results hold
if the perturbations are measured norm-wise or componentwise.
\end{abstract}

\section{Introduction}
Let $x\in\R^m$ and $f(x)\in\R^m$ be the input and output of a
computational problem respectively. We assume $f(x)$ is
differentiable. Condition numbers are real numbers measuring the
sensitivity of the output $f(x)$ to the input $x$ of the problem.
But there are many different versions of condition numbers. Below
are the definitions of {\bf Worst-case Norm-wise, Worst-case
Component-wise, Stochastic Norm-wise and Stochastic Component-wise}
condition numbers. For any set $\cal S$, we write $x\sim \cal S$ if
$x$ is a random variable (or vector) uniformly distributed in $\cal
S$. And we denote by $\underset {x\sim\cal S}{\E}f(x)$ the expected
value of $f(x)$ when $x\sim\cal S$.
\begin{eqnarray*}
\N(x)&=&\lim_{\delta\rightarrow 0}\sup_{x'\in\P(x, \delta)}
\frac{\|f(x')-f(x)\|}{\delta\|f(x)\|},\\
\C_j(x)&=&\lim_{\delta\rightarrow 0}\sup_{x'\in\CP(x, \delta)}
\frac{|f_j(x')-f_j(x)|}{\delta|f_j(x)|}, \\
\NA(x)&=&\lim_{\delta\rightarrow 0}\,\underset {x'\sim\P(x,
\delta)}{\E} \frac{\|f(x')-f(x)\|}{\delta\|f(x)\|}\mbox{ and }\\
\CA_j(x)&=&\lim_{\delta\rightarrow 0}\underset{x'\sim\CP(x,
\delta)}{\E}
\frac{|f_j(x')-f_j(x)|}{\delta|f_j(x)|}\mbox{,  where }\\
\P(x,\delta)&=&\{x'\in\R^m:\, \|x'-x\|\leq \delta\|x\|\}\mbox{ and }\\
\CP(x,\delta)&=&\{x'\in\R^m:\, |x'_i-x_i|\leq \delta|x_i|\mbox{ for
$i=1, ...,m$\}.}
\end{eqnarray*}

There are two reasons why researchers study condition numbers.
First, when we input a real number $x_1$ in a computer, the computer
can never store $x_1$ with $100\%$ accuracy. Instead, an approximate
value $x'_1$ will be stored. How accurate we can store a number
depends on the data type chosen for storing the number $x_1$.
Suppose we store this number with the data type {\bf double}. The
relative error
$$
\frac{|x'_1-x_1|} {|x_1|}\leq \frac{1}{2^{52}}\mbox{, which means }
x'\in\CP\left(x, \frac{1}{2^{52}}\right).
$$
So, if $\C_1(x)=4$, we can {\bf ensure}
$$
\frac{|f_1(x')-f_1(x)|}{|f_1(x)|}\leq
4\times\frac{1}{2^{52}}=\frac{1}{2^{50}}.
$$
Similarly, if $\CA_1(x)=4$, we can {\bf expect}
$$
\frac{|f_1(x')-f_1(x)|}{|f_1(x)|}=
4\times\frac{1}{2^{52}}=\frac{1}{2^{50}}.
$$

The second reason of studying condition numbers is related to the
stability of algorithms. Even if we can stored the input $x$
accurately, we still cannot find the output $f(x)$ with $100\%$
accuracy. It is because errors appear and accumulate after every
operation (addition, subtraction and etc.) done in a computer. How
accurate we can compute $f(x)$ depends on the algorithm applied. We
say that an algorithm is {\bf backward stable} if the computed
output $f'$ satisfy the following.
$$
f'=f(x')\mbox{ for some $x$, s.t. } \|x'-x\|\leq \|x\|\,
O(\varepsilon_{\rm machine}),
$$
where $\varepsilon_{\rm machine}$ is the upper bound for the
relative error occuring after one operation done. As a result,
applying a backward stable algorithm, one can {\bf ensure} the
computed solution $f'$ satisfy the following.
$$
\frac{\|f'-f(x)\|}{\|f(x)\|}\leq \N(x) O(\varepsilon_{\rm machine}).
$$
Or one can {\bf expect}
$$
\frac{\|f'-f(x)\|}{\|f(x)\|}= \NA(x) O(\varepsilon_{\rm machine}).
$$

Unless specified, $\log(x)$ refer to the logarithm with base $2$.
$(\log |x|-\log|x'-x|)$ is called the precision of $x'$. Roughly
speaking, it is the number of trustable (or accurate) bits.
$\log\N(x)$ is called the {\bf Worst-case Norm-wise Loss of
Precision} since $\log\N(x)$
\begin{eqnarray*}
 &=& \lim_{\delta\rightarrow 0}\sup_{x'\in\P(x, \delta)}
(\log|x|-\log|x'-x|)-(\log|f(x)|-\log|f(x')-f(x)|)\\
&=& \lim_{\delta\rightarrow 0}\sup_{x'\in\P(x, \delta)}
\mbox{Precision of input $x'$}-\mbox{Precision of output $f(x')$}.
\end{eqnarray*}
Similarly, $\log\C_j(x)$ is called the {\bf Worst-case
Component-wise Loss of Precision}. Besides, we define {\bf
Stochastic Norm-wise Loss of Precision} and {\bf Stochastic
Component-wise Loss of Precision} as follows.
\begin{eqnarray*}
\LNA(x)&=&\lim_{\delta\rightarrow 0}\,\underset {x'\sim\P(x,
\delta)}{\E} \log \frac{\|f(x')-f(x)\|}{\delta\|f(x)\|}\,\, \mbox{ and }\\
\LCA_j(x)&=&\lim_{\delta\rightarrow 0}\,\underset{x'\sim\CP(x,
\delta)}{\E} \log \frac{|f_j(x')-f_j(x)|}{\delta|f_j(x)|}.
\end{eqnarray*}

In short, both condition numbers and Loss of Precision are numbers
telling us how trustable is the computed output when there is
round-off errors. If these numbers are large, the output is not
accurate and we should not trust the output. Otherwise, the computed
output should be accurate and we can trust it.

The main goal of this paper is compare the worst-case condition
numbers with the stochastic condition numbers, i.e.
$$
\begin{array}{rclcrcl}
\N(x)&\Vs& \NA(x)&,&\C(x)&\Vs&\CA(x),\\
\log \N(x)& \Vs& \LNA(x)& \mbox{ and }& \log \C(x)& \Vs& \LCA(x).
\end{array}
$$
The theorem \ref{main1} below is one of our main results. It
compares $\NA(x)$ and $\N(x)$. Denote by $e$ the base of the
logarithm of $\ln(\cdot)$.
\begin{theorem}\label{main1}
For any general computational problem with input $x\in\R^m$ and
output $y\in\R^n$, let $k=\min\{m, n\}$ then
$$
\frac{1}{e\sqrt m}\leq\frac{\NA(x)}{\N(x)}\leq \sqrt\frac{k}{{m+2}}
\qquad\mbox{ and}
$$
$$
\frac{-\log m}{2} -\log e\leq\LNA(x)-\log\N(x)\leq \frac{\log
k-\log(m+2)}{2}.
$$
\end{theorem}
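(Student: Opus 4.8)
The plan is to reduce both inequalities to the geometry of the unit ball via the Jacobian $A:=Df(x)$, an $n\times m$ matrix. Since $f$ is differentiable, for $x'=x+\delta\|x\|v$ with $\|v\|\le1$ we have $f(x')-f(x)=\delta\|x\|\,Av+o(\delta)$; letting $\delta\to0$ turns the uniform law on $\P(x,\delta)$ into the uniform law on the unit ball $B\subset\R^m$ in the rescaled variable $v$, and gives $\N(x)=\frac{\|x\|}{\|f(x)\|}\sigma_1$ with $\sigma_1=\|A\|_2$ the top singular value. Writing $\E_v$ for expectation over $v$ uniform in $B$, this yields
$$
\frac{\NA(x)}{\N(x)}=\frac{\E_v\|Av\|}{\sigma_1},\qquad
\LNA(x)-\log\N(x)=\E_v\log\frac{\|Av\|}{\sigma_1}.
$$
First I would diagonalise: by the singular value decomposition and rotational invariance of the uniform law on $B$, one may replace $Av$ by $\bigl(\sigma_1w_1,\dots,\sigma_kw_k\bigr)$ with $w\sim B$ and $\sigma_1\ge\cdots\ge\sigma_k\ge0$, so $\|Av\|=\bigl(\sum_{i=1}^k\sigma_i^2w_i^2\bigr)^{1/2}$ depends only on the $\sigma_i$ and on moments of the coordinates of $w$.

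For the two upper bounds I would apply Jensen's inequality. Concavity of $\sqrt{\,\cdot\,}$ gives $\E_v\|Av\|\le(\E_v\sum_i\sigma_i^2w_i^2)^{1/2}$, and then the standard identity $\E_vw_i^2=\frac1{m+2}$ together with $\sum_{i=1}^k\sigma_i^2\le k\sigma_1^2$ produces $\E_v\|Av\|\le\sigma_1\sqrt{k/(m+2)}$, i.e. the claimed bound on the ratio. The upper bound on $\LNA-\log\N$ is then immediate from concavity of $\log$: $\E_v\log(\|Av\|/\sigma_1)\le\log\E_v(\|Av\|/\sigma_1)=\log(\NA/\N)\le\frac12\log\frac{k}{m+2}$.

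Both lower bounds I would get by discarding all but the top singular direction, $\|Av\|\ge\sigma_1|w_1|$, reducing them to one-dimensional estimates for a single coordinate of a uniform point in $B$, whose square is $\mathrm{Beta}(1/2,(m+1)/2)$-distributed (density of $w_1$ proportional to $(1-t^2)^{(m-1)/2}$). This gives $\E_v|w_1|=\frac{2}{m+1}\cdot\frac{\Gamma(m/2+1)}{\sqrt\pi\,\Gamma((m+1)/2)}$, and Gautschi's inequality $\Gamma(m/2+1)/\Gamma((m+1)/2)\ge\sqrt{m/2}$ yields $\E_v|w_1|\ge\sqrt{2/\pi}\,\frac{\sqrt m}{m+1}\ge\frac1{e\sqrt m}$, the last step since $\frac{m}{m+1}\ge\frac12>\frac1e\sqrt{\pi/2}$. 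For the precision bound I would use the Beta log-moment to write $\E_v\log|w_1|=\frac1{2\ln2}\bigl(\psi(1/2)-\psi((m+2)/2)\bigr)$ and show it is at least $-\frac12\log m-\log e$.

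The hard part will be this last digamma estimate, precisely because it is tight: at $m=1$ one finds $\E_v\log|w_1|=-\log e$, so crude bounds such as $\psi(z)<\ln z$ or $\psi(z)<\ln z-\frac1{2z}$ are not sharp enough for small $m$. I would instead argue by monotonicity. Setting $h(m)=\E_v\log|w_1|+\frac12\log m+\log e$, one verifies $h(1)=0$ exactly from $\psi(3/2)=\psi(1/2)+2$, and then shows $h'(m)>0$ for all $m\ge1$: since $h'(m)=\frac1{2\ln2}\bigl(\frac1m-\frac12\psi'(\tfrac{m+2}2)\bigr)$, the trigamma bound $\psi'(z)<\frac1z+\frac1{z^2}$ gives $\frac1m-\frac12\psi'(\tfrac{m+2}2)>\frac{4}{m(m+2)^2}>0$, whence $h\ge0$. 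The remaining routine points are justifying the $\delta\to0$ linearisation inside the expectations (uniform smallness of the $o(\delta)$ term on $\P(x,\delta)$, and integrability of $\log\|Av\|$ near the kernel of $A$), which I would dispatch with dominated convergence.
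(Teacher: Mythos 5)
Your proposal is correct, and its overall architecture coincides with the paper's proof: linearisation through the Jacobian, reduction by singular value decomposition and rotational invariance of the uniform law on the ball, Jensen's inequality for both upper bounds, and the pointwise bound $\|Av\|\geq\sigma_1|w_1|$ (keeping only the top singular direction) for both lower bounds. Where you genuinely diverge is in how the one-dimensional estimates are established, and in the logical order of the two lower bounds. The paper never works with the marginal law of a coordinate: it factors $w_1=\|w\|\cos\vartheta(e_1,w)$ into independent radius and angle, computes $\E\|w\|$, $\E\|w\|^2$, $\E\ln\|w\|$ (Lemma \ref{l1}) and $\E|\cos\vartheta|$, $\E(\cos\vartheta)^2$, $\E\ln|\cos\vartheta|$ (Lemma \ref{l2}) via Wallis-type recursions for $I_m$ and $J_m$, obtains the exact sums $-\frac{1}{m}-\frac{1}{m-2}-\cdots$, bounds them by $-\frac{1}{2}\ln m-1$ by comparison with the area under $1/x$, and only then deduces $\NA(x)/\N(x)\geq\frac{1}{e\sqrt m}$ from the logarithmic bound by concavity of $\log$. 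You instead read off the ${\rm Beta}(1/2,(m+1)/2)$ law of $w_1^2$ and use special-function machinery: Gautschi's inequality gives $\E|w_1|\geq\sqrt{2/\pi}\,\sqrt m/(m+1)$, which proves the ratio lower bound directly and with a better asymptotic constant ($\sqrt{2/\pi}$ in place of $1/e$); and the digamma identity plus the trigamma bound $\psi'(z)<\frac{1}{z}+\frac{1}{z^2}$, fed into your monotonicity argument for $h(m)$, gives the precision bound --- your computations here ($h(1)=0$ exactly, and $h'(m)>\frac{1}{2\ln 2}\cdot\frac{4}{m(m+2)^2}>0$) check out. Each route buys something: the paper's recursions are elementary and self-contained and yield the exact formulas of Corollary \ref{co1} as a by-product, whereas your argument is shorter where the paper grinds through recursions, isolates why the constant $\log e$ cannot be improved (equality at $m=1$), and actually strengthens the first inequality of the theorem. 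Your closing remarks on dominated convergence and on integrability of $\log\|Av\|$ near the kernel of $A$ also address interchange-of-limit issues that the paper passes over in silence.
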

Similar results can be found in \cite{WWWS86} and  \cite{S90}. In
this paragraph, we explain the differences between our theorem
\ref{main1} and results in other papers. We write $x\sim N(\mu,
\Sigma)$ if $x$ follows the multivariate normal distribution with
mean $\mu$ and variance-covariance matrix $\Sigma$. In \cite{S90},
the following quality (which is different from our definitions of
condition numbers) was studied.
\begin{eqnarray*}
\underset{(x'-x)\sim N(0, \Sigma)}{\E} \|[\nabla f_1(x), ...,\nabla
f_n(x)]^T (x'-x)\|,
\end{eqnarray*}
where $\Sigma$ can be any variance-covariance  matrix and $\nabla
f_j(x)$ is the gradient of $f_j(x)$. So, their results are
completely different from ours and depend on $\Sigma$. The above
quantity was studied since, by Taylor expansion, when $x'$ is close
to $x$,
\begin{eqnarray*}
 \|[\nabla f_1(x), ...,\nabla
f_n(x)]^T (x'-x)\| \approx \|f(x')-f(x)\|.
\end{eqnarray*}
The results in \cite{WWWS86} hold only for the problem of solving
system of linear equations.  Our theorem \ref{main1} holds for
general computational problems. Besides, the output $f(x)$ in
\cite{WWWS86} was considered to be a real number in $\R$. In this
paper, the output is considered as a real vector in $\R^n$.

Similar to theorem \ref{main1} above,  the corollary \ref{co1} below
compares $\NA(x)$ and $\N(x)$. Comparing with theorem \ref{main1},
corollary \ref{co1} is less explicit and less general (only holds
when $n=1$). But, it provides {\bf equality result instead of
inequality}. Both theorem \ref{main1} and corollary \ref{co1} will
be proved in section \ref{s1}.
\begin{corollary}\label{co1}
For any general computational problem with input $x\in\R^m$ and
output $y\in\R$,
$$
\frac{\NA(x)}{\N(x)}=\left\{
\begin{array}{ll}
\left(\frac{(m)(m-2)...1}{(m+1)(m-1)...2}\right)   \qquad&\mbox{if $m$ is odd}\\
\left(\frac{(m)(m-2)...2}{(m+1)(m-1)...1}\right)\left(\frac{2}{\pi}\right)
\qquad&\mbox{if $m$ is even}
\end{array}
\right.\qquad\mbox{ and}
$$
$$
\frac{\LNA(x)-\log\N(x)}{\log e}=\left\{ \begin{array}{ll}
-\frac{1}{m}-\frac{1}{m-2}-...-\frac{1}{3}-1\qquad&\mbox{ if $m$ is
odd}\\
-\frac{1}{m}-\frac{1}{m-2}-...-\frac{1}{2}-\ln 2\quad&\mbox{ if $m$
is even}
\end{array}
\right..
$$
\end{corollary}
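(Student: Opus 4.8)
The plan is to reduce the corollary to two elementary expectations over a vector $v$ drawn uniformly from the Euclidean unit ball $B^m\subset\R^m$, and then to evaluate those expectations in closed form. Throughout I use the Euclidean ($2$-)norm, which is what makes $\P(x,\delta)$ a round ball and underlies every symmetry argument below.

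First I would linearise. Writing $g=\nabla f(x)\in\R^m$ and $u=x'-x$, differentiability gives $f(x')-f(x)=g^Tu+o(\|u\|)$, and on $\P(x,\delta)$ we have $\|u\|\le\delta\|x\|$, so the remainder is $o(\delta)$ and drops out after dividing by $\delta$ and letting $\delta\to0$. Hence $\N(x)=\frac{\|g\|\,\|x\|}{|f(x)|}$ (the supremum of $|g^Tu|$ over the ball is attained along $g$), while for $\NA(x)$ the vector $u$ is uniform in the ball of radius $\delta\|x\|$. By rotational invariance of the uniform law on a round ball, $g^Tu$ has the same distribution as $\|g\|\,w_1$ with $w$ uniform in that ball and $w_1$ any fixed coordinate; scaling $w=\delta\|x\|\,v$ with $v$ uniform in $B^m$ gives $\NA(x)=\frac{\|g\|\,\|x\|}{|f(x)|}\,\E|v_1|$, so that the geometric factors cancel and $\frac{\NA(x)}{\N(x)}=\E|v_1|$. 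The same substitution applied to the logarithmic quantities makes the $\log\delta$ and $\log|f(x)|$ terms cancel against $\log\N(x)$, leaving the clean identity $\LNA(x)-\log\N(x)=\E[\log|v_1|]$; dividing by $\log e$ converts the base-$2$ logarithm to $\E[\ln|v_1|]$. (In the logarithmic case one must also check integrability near $v_1=0$ and justify exchanging the limit with the expectation, but $\ln|t|$ has only an integrable singularity, so dominated convergence applies.)

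Next I would determine the marginal law of $v_1$. Slicing $B^m$ by $\{v_1=t\}$ yields a ball of radius $\sqrt{1-t^2}$ in $\R^{m-1}$, so with $V_m$ the volume of $B^m$ the density is $p(t)=\frac{V_{m-1}}{V_m}(1-t^2)^{(m-1)/2}$ on $[-1,1]$. For the ratio, the substitution $s=1-t^2$ gives $\int_0^1 t(1-t^2)^{(m-1)/2}\,dt=\frac1{m+1}$, whence $\E|v_1|=\frac{2V_{m-1}}{(m+1)V_m}$. Inserting $V_m=\pi^{m/2}/\Gamma(\tfrac m2+1)$ rewrites this as $\frac{2\,\Gamma(\tfrac m2+1)}{(m+1)\sqrt\pi\,\Gamma(\tfrac{m+1}2)}$, and reducing the integer or half-integer Gamma values by their functional equation produces exactly the two double-factorial products in the statement (equivalently, one may record the recursion $\E|v_1|=\frac{m}{m+1}\,\E|v'_1|$ with $v'$ uniform in $B^{m-2}$, together with the base cases $m=1,2$, and unfold it).

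Finally, the harder evaluation is $\E[\ln|v_1|]$, which I expect to be the main obstacle. Writing $s=t^2$ recasts it as $\E[\ln|v_1|]=\frac{V_{m-1}}{2V_m}\int_0^1 (\ln s)\,s^{-1/2}(1-s)^{(m-1)/2}\,ds$, a parameter-derivative of the Beta integral: differentiating $B(a,b)=\int_0^1 s^{a-1}(1-s)^{b-1}ds$ in $a$ at $a=\tfrac12,\ b=\tfrac{m+1}2$ gives the value $B(\tfrac12,\tfrac{m+1}2)\left(\psi(\tfrac12)-\psi(\tfrac{m+2}2)\right)$. Since normalisation forces $\frac{V_{m-1}}{V_m}=1/B(\tfrac12,\tfrac{m+1}2)$, the Beta factor cancels and $\E[\ln|v_1|]=\tfrac12\left(\psi(\tfrac12)-\psi(\tfrac{m+2}2)\right)$. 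The delicate part is turning $\psi(\tfrac{m+2}2)$ into a harmonic sum and splitting into the two parity cases: using $\psi(\tfrac12)=-\gamma-2\ln2$, $\psi(\ell+1)=-\gamma+\sum_{k=1}^\ell\frac1k$ for $m=2\ell$, and $\psi(\ell+\tfrac32)=-\gamma-2\ln2+2\sum_{k=1}^{\ell+1}\frac1{2k-1}$ for $m=2\ell+1$, the $-\gamma$ cancels and one recovers $-\ln2-\tfrac12\sum_{k=1}^{\ell}\frac1k$ (even case) and $-\sum_{k=1}^{\ell+1}\frac1{2k-1}$ (odd case), which are precisely the claimed expressions for $(\LNA(x)-\log\N(x))/\log e$. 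Alternatively, the recursion $\E[\ln|v_1|]=\E[\ln|v'_1|]-\frac1m$ with $v'$ uniform in $B^{m-2}$ avoids the digamma function altogether and unfolds directly to the stated sums from the base cases $m=1$ (where $\E[\ln|v_1|]=-1$) and $m=2$ (where $\E[\ln|v_1|]=-\ln2-\tfrac12$).
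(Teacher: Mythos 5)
Your argument is correct, and it reaches both formulas of Corollary \ref{co1} by a genuinely different evaluation than the paper's. The reduction is shared: both linearize $f$ to get $\N(x)=\|\nabla f(x)\|\,\|x\|/|f(x)|$ and exploit rotational symmetry of the uniform law on $\P(x,\delta)$. From there, the paper factors the increment as $\|\nabla f(x)\|\,\|x'-x\|\,|\cos\vartheta(\nabla f(x),x'-x)|$ and evaluates the radial moments $\E\|u\|=\frac{m}{m+1}$, $\E\ln\|u\|=\frac{-1}{m}$ (Lemma \ref{l1}) separately from the angular moments (Lemma \ref{l2}), the latter via the angle density $(\sin t)^{m-2}/I_{m-2}(\pi)$ imported from \cite{CCH05} and Wallis-type recursions for $I_m$ and $J_m$; this factorization silently uses the independence of $\|u\|$ and $u/\|u\|$ for $u$ uniform in a ball. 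You instead note that after rotation the whole quantity is the first coordinate of a uniform point $v$ in $B^m(0,1)$ (indeed $\|v\|\,|\cos\vartheta(e_1,v)|=|v_1|$, so the two routes compute the same random variable), obtain its marginal density $\propto(1-t^2)^{(m-1)/2}$ by slicing, and evaluate $\E|v_1|$ by an elementary substitution plus Gamma-function reduction and $\E\ln|v_1|$ as a parameter derivative of a Beta integral, giving $\frac12\bigl(\psi(\frac12)-\psi(\frac{m+2}{2})\bigr)$; I verified that your digamma identities and your Gamma-to-double-factorial reduction unfold exactly to the two parity cases in the statement, and that your fallback recursions $\E|v_1|=\frac{m}{m+1}\E|v_1'|$ and $\E\ln|v_1|=\E\ln|v_1'|-\frac1m$ are also correct. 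What your route buys: no appeal to the external angle-density result, no implicit radius--direction independence, a compact closed form before the parity split, and uniform coverage of $m=1,2$ (the paper's Lemma \ref{l2} is stated only for $m\geq3$, so those cases need separate checking there). What the paper's route buys: its two lemmas simultaneously deliver the second moments $\E\|u\|^2=\frac{m}{m+2}$ and $\E\cos^2\vartheta=\frac1m$, which are exactly what the proof of Theorem \ref{main1} consumes, so the polar factorization does double duty across both results. One shared soft spot, which you at least flag while the paper ignores it entirely: exchanging $\lim_{\delta\to0}$ with the expectation of the logarithm needs justification that the $O(\|x'-x\|^2)$ Taylor remainder does not disturb the integrable singularity of $\ln|v_1|$ at $v_1=0$; your dominated-convergence remark is the right idea but would need to be carried out to make that step rigorous.
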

The theorem \ref{main2} below compares $\C(x)$ and $\CA(x)$.
\begin{theorem}\label{main2}
For any general computational problem with input $x\in\R^m$ and
output $y\in\R^n$, if $m>1$,
$$
\frac{-\log (m-1)}{2}-\frac{\log 3}{2}- (1+\varepsilon_m)\log
e<\LCA_j(x) - \log\C_j(x)\leq -1\, \mbox{ and }
$$
$$
\frac{e^{-(1+\varepsilon_m)}}{\sqrt
{3(m-1)}}<\frac{\CA_j(x)}{\C_j(x)}\leq \frac{1}{2} \qquad\mbox{
where }\qquad\varepsilon_m = \frac{2+2\ln m}{\sqrt {m-1}}.
$$
\end{theorem}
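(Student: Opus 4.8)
The plan is to collapse each side of the inequalities onto a single scalar random variable and then study how much that variable can concentrate near the origin. Fix $j$ and set $a_i=\frac{\partial f_j}{\partial x_i}(x)\,|x_i|$. By differentiability, for $x'\in\CP(x,\delta)$ we may write $x_i'-x_i=\delta|x_i|s_i$ with $s_i\in[-1,1]$, and then $f_j(x')-f_j(x)=\delta\sum_i a_i s_i+o(\delta)$. Writing $S=\sum_{i=1}^m a_i s_i$, the quotient $\frac{|f_j(x')-f_j(x)|}{\delta|f_j(x)|}$ tends to $|S|/|f_j(x)|$ as $\delta\to0$. Taking the supremum over $s\in[-1,1]^m$ gives $\C_j(x)=\|a\|_1/|f_j(x)|$, since $\sup|\sum a_i s_i|=\sum|a_i|$; taking $s$ uniform on $[-1,1]^m$ gives $\CA_j(x)=\E|S|/|f_j(x)|$ and $\LCA_j(x)-\log\C_j(x)=\E\log|S|-\log\|a\|_1$. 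Both target quantities are therefore independent of $|f_j(x)|$ and invariant under scaling of $a$, so I may normalise $\|a\|_1=1$ and, absorbing signs into the symmetric $s_i$, assume $a_i\ge0$. Everything now depends only on the law of $S$.

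The two upper bounds fall out at once. Since $\E|s_i|=\tfrac12$, the triangle inequality gives $\E|S|\le\sum_i a_i\E|s_i|=\tfrac12\|a\|_1$, i.e. $\CA_j(x)/\C_j(x)\le\tfrac12$; and concavity of $\log$ (Jensen) gives $\E\log|S|\le\log\E|S|\le\log\|a\|_1-1$, i.e. $\LCA_j(x)-\log\C_j(x)\le-1$. Jensen also feeds the two lower bounds into one another: from $\E|S|\ge2^{\,\E\log_2|S|}$ we see that once $\E\log_2|S|-\log_2\|a\|_1>-\tfrac12\log_2(m-1)-\tfrac12\log_2 3-(1+\varepsilon_m)\log_2 e$ is proved, exponentiating base $2$ yields exactly $\CA_j(x)/\C_j(x)>e^{-(1+\varepsilon_m)}/\sqrt{3(m-1)}$. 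Hence the entire theorem reduces to one lower estimate for $\E\log|S|$.

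The remaining and genuinely substantial step is to bound $\E\log|S|$ from below. The only dangerous contribution is the logarithmic singularity where $|S|$ is near $0$, so what is needed is an anti-concentration (small-ball) estimate for $S$. The elementary bound, that each $a_i s_i$ is uniform on $[-a_i,a_i]$ with density at most $1/(2a_i)$ and that convolution only smooths, gives $\sup_u p_S(u)\le1/(2\|a\|_\infty)$ and hence $\Prob[|S|\le t]\le t/\|a\|_\infty$; unfortunately this loses a factor of order $\sqrt m$ when the weights are spread out, which is precisely the regime (near-equal weights, where $S$ is almost Gaussian) that governs the worst case. The plan is therefore to use the characteristic function $\phi_S(\xi)=\prod_i\frac{\sin(a_i\xi)}{a_i\xi}$ together with the inequality $\frac{\sin u}{u}\le e^{-u^2/6}$ on the central range $|u|\le\pi$, which yields $\phi_S(\xi)\le e^{-\|a\|_2^2\xi^2/6}$ there, while on the tail each factor is bounded by $1/(a_i|\xi|)$. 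Integrating gives a dimension-correct bound of the form $\Prob[|S|\le t]\le Ct/\|a\|_2$ up to a controllable remainder, and substituting it into $\E\log|S|=\log\|a\|_2+\E\log(|S|/\|a\|_2)$ bounds the singularity at the origin with only an $O(1)$ plus $\varepsilon_m$ loss. Combined with a norm comparison obtained by splitting off the largest coordinate (which is what replaces $m$ by $m-1$ and contributes the $-\tfrac12\log(m-1)$ term) and the normalisation $\|a\|_1=1$, this produces the claimed estimate; the factor $\tfrac12\log 3$ is simply $\log$ of the standard deviation $\|a\|_2/\sqrt3$ of $S$.

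I expect the main obstacle to be exactly this anti-concentration estimate with the correct dependence on $m$. Getting merely $\Prob[|S|\le t]\lesssim t/\|a\|_\infty$ is easy but too lossy; extracting the sharp $1/\|a\|_2$ scaling, and in particular pinning down the error term $\varepsilon_m=\frac{2+2\ln m}{\sqrt{m-1}}$ (whose $1/\sqrt{m-1}$ has the flavour of a Berry--Esseen rate for the $m-1$ remaining coordinates, and whose $\ln m$ arises from integrating $\log|S|$ against the near-Gaussian density over its effective range), is where the real work lies. Verifying that the crude small-ball regime and the Gaussian regime can be patched together uniformly over all admissible weight vectors $a$ is the delicate part.
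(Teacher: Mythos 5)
You set up the problem exactly as the paper does: the Taylor reduction to $S=\sum_i a_is_i$, the identities $\C_j(x)=\|a\|_1/|f_j(x)|$ and $\CA_j(x)=\E|S|/|f_j(x)|$, the two upper bounds via $\E|s_i|=\frac12$ and Jensen, and the remark that the ratio lower bound follows from the $\E\log|S|$ lower bound by exponentiating (Jensen again) all coincide with the paper's proof. The gap is precisely the step you yourself defer: the lower bound on $\E\log|S|$ is never proved, and the sketch you give for it does not close. First, the Fourier patching fails in the dominant-weight regime: if $a$ is (close to) a coordinate vector, then $\phi_S(\xi)=\sin(\xi)/\xi$ up to scaling, the tail bound $\prod_i 1/(a_i|\xi|)$ is not integrable, and the density bound $\sup_u p_S(u)\le\frac{1}{2\pi}\int|\phi_S|$ is vacuous; one must switch to the elementary $1/(2\|a\|_\infty)$ bound and splice the two regimes with explicit constants. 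Second, and more fundamentally, the theorem is not an up-to-$O(1)$ statement, so the splice must produce a numerical constant. Writing your small-ball estimate as $\Prob(|S|\le t)\le Ct/\|a\|_2$ and using $\|a\|_2\ge\|a\|_1/\sqrt m$, your argument gives $\E\ln|S|\ge\ln\|a\|_2-C\ge-\frac12\ln m-C$, which implies the stated inequality only if $C<1+\frac12\ln 3+\varepsilon_m-\frac12\ln\frac{m}{m-1}$, i.e.\ asymptotically $C<1.549$; meanwhile the near-Gaussian (equal-weights) case forces any valid $C$ to satisfy $C\ge\sqrt{6/\pi}\approx 1.38$, and the optimal constant $C=\sqrt2$ is Ball's cube-slicing theorem, itself a hard result. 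Nothing in your sketch produces an explicit $C$, let alone one inside this thin window. Relatedly, your ``splitting off the largest coordinate'' step does not correspond to any actual estimate (the passage from $m$ to $m-1$ has to be absorbed into the slack $\varepsilon_m>\frac12\ln\frac{m}{m-1}$, as above), and the constants $\frac12\log 3$, $-1$ and $\varepsilon_m=\frac{2+2\ln m}{\sqrt{m-1}}$ are rationalized after the fact rather than derived.

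For contrast, the paper never proves anti-concentration for general weights at all. Its key tool is Lemma \ref{l6}, quoted from \cite{WWWS86}: among all $a$ with $\|a\|_1=m$, the equal-weights vector $e_m$ minimizes $\E\,\varphi(|a^Tu|)$ for every increasing $\varphi$; taking $\varphi=\ln$ collapses the whole lower bound to the single explicit quantity $\E\ln|u_1+\cdots+u_m|$. That quantity is then handled by the integration-by-parts identity of Lemma \ref{l5}, which converts it to $\E[(u_1+\cdots+u_{m-1}+1)\ln|u_1+\cdots+u_{m-1}+1|]-1$, and by the Berry--Esseen comparison (Lemmas \ref{conver}, \ref{l7}, \ref{l4} and Corollary \ref{co2}); it is the Berry--Esseen error $1/\sqrt m$ integrated over $[0,\sqrt{3m}]$ that produces exactly $\varepsilon_m$. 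To complete your route you would need either to prove the small-ball bound with an explicit constant below $1+\frac12\ln 3$ (e.g.\ by invoking Ball's theorem), or to import the rearrangement lemma from \cite{WWWS86} as the paper does.
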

\noindent {\bf Note:} It can be easily shown that, $\LCA_j(x) =
\log\C_j(x)-1$ and $\CA_j(x)=0.5\times\C_j(x)$ when $m=1$.

Similar results can also be found in \cite{WWWS86}. Just like our
theorem \ref{main2}, the result in \cite{WWWS86} depends on a
constant $\varepsilon'_m$ which approach to  $0$ as $m\rightarrow
\infty$. But, the speed of convergency was not discussed. Let alone
a formula computing the value (or bound) of $\varepsilon'_m$ for
general $m$. Our theorem \ref{main2} only depends on the size of the
input and output ($m$ and $n$). Once again, in \cite{WWWS86}, only
one problem (solving system of linear equations) was considered. In
this paper, we consider general computational problems. Theorem
\ref{main2} will be proved in
section \ref{s2}.\\
\noindent {\bf Note:} In practice, $\frac{\log m}{2}$ is not very
large ($\frac{\log m}{2}=20$ when the number of data input $m =
1.0995... \times 10^{12}$). So, from the theorems above, we claim
that the value difference between worst case and stochastic loss of
precision are normally very small in practice.

\section{Proof of Corollary \ref{co1} and Theorem \ref{main1}}\label{s1}
For any $c\in\R^m$ and $r\in\R$, let the ball centered at $c$ and
with radius $r$ be
$$
B^m(c,r)=\{u\in\R^m:\, \|u-c\|\leq r\}
$$
and let the sphere be
$$
S^{m-1}(c,r)=\{u\in\R^m:\, \|u-c\|=r\}.
$$
\begin{lemma}\label{l1}
If $u\sim B^m(0,1)$, then
$$
\E(\|u\|)=\frac{m}{m+1},\, \E(\|u\|^2)=\frac{m}{m+2},\mbox{ and }\,
\E(\ln\|u\|)=\frac{-1}{m}.
$$
\end{lemma}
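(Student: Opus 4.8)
The plan is to reduce all three expectations to one-dimensional integrals in the radial variable $r=\|u\|$. First I would identify the distribution of the scalar random variable $R=\|u\|$. Since $u$ is uniform in $B^m(0,1)$, for any $r\in[0,1]$ the probability $\Prob(R\le r)$ equals the ratio of the volume of $B^m(0,r)$ to that of $B^m(0,1)$; because the volume of an $m$-dimensional ball scales like the $m$-th power of its radius, this ratio is exactly $r^m$. Hence $R$ has cumulative distribution function $F(r)=r^m$ on $[0,1]$, and therefore density $f(r)=m\,r^{m-1}$. This single observation is what makes every claimed expectation a routine power integral.

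With the radial density in hand, I would compute the first two expectations directly: $\E(\|u\|)=\int_0^1 r\cdot m\,r^{m-1}\,dr = \frac{m}{m+1}$ and $\E(\|u\|^2)=\int_0^1 r^2\cdot m\,r^{m-1}\,dr = \frac{m}{m+2}$, which match the stated values. For the third, $\E(\ln\|u\|)=\int_0^1 (\ln r)\,m\,r^{m-1}\,dr$, I would integrate by parts, taking the antiderivative $v=r^m$ of $m\,r^{m-1}$, so that the integral becomes the boundary term $\bigl[r^m\ln r\bigr]_0^1$ minus $\int_0^1 r^{m-1}\,dr = \frac{1}{m}$.

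The only point requiring genuine care is the boundary term in this last computation. At $r=1$ it vanishes because $\ln 1 = 0$, and at $r\to 0^+$ it vanishes because $r^m\ln r\to 0$ for every $m\ge 1$; this same estimate also shows that $\ln r$ is integrable against $r^{m-1}$ near the origin, so the integral converges in the first place. Granting the vanishing boundary term, one obtains $\E(\ln\|u\|)=-\frac{1}{m}$, as claimed. Thus the entire lemma rests on establishing the radial density $m\,r^{m-1}$; the three integrals that follow are elementary, with the mild convergence and boundary check in the logarithmic case being the only step that is not purely mechanical.
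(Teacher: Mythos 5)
Your proposal is correct and follows essentially the same route as the paper's proof: derive the radial distribution $\Prob(\|u\|<r)=r^m$ from the volume ratio, differentiate to get the density $m\,r^{m-1}$, and evaluate the three resulting one-dimensional integrals, with integration by parts for the logarithmic one. Your explicit verification that the boundary term $r^m\ln r$ vanishes as $r\to 0^+$ is a careful touch the paper leaves implicit, but the argument is the same.
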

\begin{proof}
Since $u\sim B^m(0,1)$,
\begin{eqnarray*}
\Prob(\|u\|<r) =\frac{\Vol B^m(0,r)}{\Vol B^m(0,1)}=r^m.
\end{eqnarray*}
So, the p.d.f. (probability density function) of $\|u\|$ is
\begin{eqnarray*}
f_{\|u\|}(r) =\frac{d}{dr}\left(r^m\right)=mr^{m-1}.
\end{eqnarray*}
By the definition of expectation and integration by parts,
\begin{eqnarray*}
\E(\|u\|) &=&\int_0^1 mr^mdr\,=\,\frac{m}{m+1}.\\
\E(\|u\|^2) &=&\int_0^1
mr^{m+1}dr\,=\,\frac{m}{m+2}.\\
\E(\ln\|u\|) &=&\int_0^1 mr^{m-1}\ln(r)\,dr \,=\,\frac{-1}{m}.
\end{eqnarray*}
\end{proof}
For any vectors $u, v\in\R^m/\{0\}$, let the angle between $u$ and
$v$ be
$$
\vartheta(u,v)=\arccos\left(\frac{u^T\, v}{\|u\|\,
\|v\|}\right)\in[0, \pi].
$$
Suppose $u$ is fixed and $v/\|v\|\sim S^{m-1}(0,1)$. From
\cite{CCH05}, the p.d.f. (probability density function) of
$\vartheta(u,v)$ is
$$
f_{\vartheta(u,v)}(t)=\frac{(\sin(t))^{m-2}}{I_{m-2}(\pi)}, \mbox{
where } I_m(T) = \int_0^T(\sin(t))^{m}dt.
$$
By integration by part, it can be shown that
\begin{equation}\label{eq2}
I_m\left(\frac{\pi}{2}\right)=\left(\frac{m-1}{m}\right)I_{m-2}\left(\frac{\pi}{2}\right).
\end{equation}
It is easy to check that
\begin{equation}\label{eq3}
I_0\left(\frac{\pi}{2}\right)=\frac{\pi}{2}\qquad\mbox{ and }\qquad
I_1\left(\frac{\pi}{2}\right)=1.
\end{equation}
Combining equations (\ref{eq2}) and (\ref{eq3}) , for $m\geq 2$,
\begin{equation}\label{eq1}
I_m\left(\frac{\pi}{2}\right)= \left\{
\begin{array}{ll}
\left(\frac{(m-1)(m-3)...2}{m(m-2)...1}\right)   \qquad&\mbox{if $m$
is odd}\\
\left(\frac{(m-1)(m-3)...1}{m(m-2)...2}\right)\left(\frac{\pi}{2}\right)
\qquad&\mbox{if $m$ is even}
\end{array}
\right..
\end{equation}
\begin{lemma}\label{l2}
Suppose $u$ is a fixed vector in $\R^m$ and $v/\|v\|\sim
S^{m-1}(0,1)$. Then, for $m\geq 3$,
$\E(|\cos\vartheta(u,v)|^2)=1/m$,
$$
\E(|\cos\vartheta(u,v)|)=\left\{
\begin{array}{ll}
\left(\frac{(m-2)(m-4)...1}{(m-1)(m-3)...2}\right)   \qquad&\mbox{if $m$ is odd}\\
\left(\frac{(m-2)(m-4)...2}{(m-1)(m-3)...1}\right)\left(\frac{2}{\pi}\right)
\qquad&\mbox{if $m$ is even}
\end{array}
\right.\qquad\mbox{ and}
$$
$$
\E(\ln|\cos\vartheta(u,v)|)=\left\{ \begin{array}{ll}
-\frac{1}{m-2}-\frac{1}{m-4}-...-\frac{1}{3}-1\qquad&\mbox{ if $m$
is
odd}\\
-\frac{1}{m-2}-\frac{1}{m-4}-...-\frac{1}{2}-\ln 2\qquad&\mbox{ if
$m$ is even}
\end{array}
\right..
$$
\end{lemma}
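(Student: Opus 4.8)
The plan is to turn each of the three expectations into an integral against the stated density $f_{\vartheta(u,v)}(t)=\sin^{m-2}(t)/I_{m-2}(\pi)$ on $[0,\pi]$, and then use that $\sin t$, $|\cos t|$ and $\ln|\cos t|$ are all symmetric about $t=\pi/2$. This symmetry folds every integral onto $[0,\pi/2]$, where $|\cos t|=\cos t$, and turns the normalizing constant into $I_{m-2}(\pi)=2I_{m-2}(\pi/2)$; the two factors of $2$ cancel, so each expectation becomes a ratio with denominator $I_{m-2}(\pi/2)$. This reduces the lemma to evaluating $\int_0^{\pi/2}\cos^2 t\,\sin^{m-2}t\,dt$, $\int_0^{\pi/2}\cos t\,\sin^{m-2}t\,dt$, and $\int_0^{\pi/2}\ln(\cos t)\,\sin^{m-2}t\,dt$.

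The first two are routine. For $\E(|\cos\vartheta|^2)$ I would write $\cos^2 t=1-\sin^2 t$, so the numerator is $I_{m-2}(\pi/2)-I_m(\pi/2)$; the recurrence (\ref{eq2}) gives $I_m(\pi/2)=\frac{m-1}{m}I_{m-2}(\pi/2)$, so the numerator equals $\frac{1}{m}I_{m-2}(\pi/2)$ and the ratio is exactly $1/m$. For $\E(|\cos\vartheta|)$ the numerator integrates elementarily, $\int_0^{\pi/2}\cos t\,\sin^{m-2}t\,dt=1/(m-1)$, so the expectation equals $\frac{1}{(m-1)I_{m-2}(\pi/2)}$; substituting the closed form (\ref{eq1}) for $I_{m-2}(\pi/2)$ in the odd and even cases and simplifying yields the two stated products.

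The third expectation is the main obstacle. Writing $A_m=\frac{1}{I_{m-2}(\pi/2)}\int_0^{\pi/2}\ln(\cos t)\,\sin^{m-2}t\,dt$, my plan is to establish the recurrence $A_m=A_{m-2}-\frac{1}{m-2}$ for $m\geq 4$. I would split $\sin^{m-2}t=\sin^{m-4}t-\sin^{m-4}t\cos^2 t$ and integrate the correction term by parts with $u=\cos t\,\ln\cos t$ and $dv=\sin^{m-4}t\cos t\,dt$; since $du=-\sin t\,(\ln\cos t+1)\,dt$ and $v=\sin^{m-3}t/(m-3)$, the boundary term vanishes (using $x\ln x\to 0$ as $t\to\pi/2$ and $\sin^{m-3}t\to 0$ as $t\to 0$ for $m\geq 4$), leaving $K_m=\frac{m-3}{m-2}K_{m-2}-\frac{1}{m-2}I_{m-2}(\pi/2)$ where $K_m$ is the numerator. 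Dividing by $I_{m-2}(\pi/2)$ and using (\ref{eq2}) once more to rewrite $I_{m-2}(\pi/2)=\frac{m-3}{m-2}I_{m-4}(\pi/2)$ collapses this to the claimed recurrence. The delicate points are verifying that the boundary term really vanishes and tracking the $I$-ratios correctly; this is where I expect to spend the most care.

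Finally I would pin down the two base cases by the substitution $s=\cos t$: the odd chain starts from $A_3=\int_0^{\pi/2}\ln(\cos t)\sin t\,dt / I_1(\pi/2)=\int_0^1\ln s\,ds=-1$, and the even chain from $A_2=\int_0^{\pi/2}\ln(\cos t)\,dt / I_0(\pi/2)=-\ln 2$ (using the classical value $\int_0^{\pi/2}\ln\cos t\,dt=-\tfrac{\pi}{2}\ln 2$). Iterating $A_m=A_{m-2}-\frac{1}{m-2}$ from $A_3$ down the odd chain and from $A_2$ down the even chain produces the sums $-(1+\tfrac13+\cdots+\tfrac{1}{m-2})$ and $-\ln 2-(\tfrac12+\tfrac14+\cdots+\tfrac{1}{m-2})$ respectively, which are exactly the stated formulas. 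One could alternatively recognize the numerator as a Beta integral and differentiate in the exponent of $\cos t$, obtaining $A_m=\tfrac12[\psi(1/2)-\psi(m/2)]$ in terms of the digamma function $\psi$; but the recurrence above keeps everything inside the elementary framework already built from $I_m(\pi/2)$.
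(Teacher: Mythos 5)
Your proposal is correct and takes essentially the same approach as the paper: the first two moments are computed identically (folding onto $[0,\pi/2]$ and using the recurrence for $I_m(\pi/2)$), and your recurrence $A_m=A_{m-2}-\frac{1}{m-2}$ is exactly the paper's equation (\ref{eq4}) with the index shifted by two, obtained likewise by integration by parts and anchored at the same base values $-1$ (odd chain) and $-\ln 2$ (even chain). The only differences are cosmetic: you normalize by $I_{m-2}(\pi/2)$ from the start and assign the parts slightly differently, while the paper tracks $J_m$ and $I_m$ separately.
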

\begin{proof}
By the definition of Expectation,
\begin{eqnarray*}
\E(|\cos\vartheta(u,v)|) &=&\int_0^\pi |\cos
(t)|f_{\vartheta(u,v)}(t)dt=\int_0^\pi |\cos (t)|\frac{(\sin(t))^{m-2}}{I_{m-2}(\pi)}dt\\
&=&\int_0^{\pi/2} \cos (t)\frac{(\sin(t))^{m-2}}{I_{m-2}(\pi/2)}dt=\int_0^{\pi/2} \frac{(\sin(t))^{m-2}}{I_{m-2}(\pi/2)}d\sin(t)\\
&=&\frac{1}{(m-1)I_{m-2}(\pi/2)}.
\end{eqnarray*}
By equation (\ref{eq1}), for $m\geq 3$,
\begin{eqnarray*}
\E(|\cos\vartheta(u,v)|) &=&\left\{
\begin{array}{ll}
\left(\frac{(m-2)(m-4)...1}{(m-1)(m-3)...2}\right)   \qquad&\mbox{if $m$ is odd}\\
\left(\frac{(m-2)(m-4)...2}{(m-1)(m-3)...1}\right)\left(\frac{2}{\pi}\right)
\qquad&\mbox{if $m$ is even}
\end{array}
\right..
\end{eqnarray*}
Similarly,
\begin{eqnarray*}
\E((\cos\vartheta(u,v))^2) &=&\int_0^\pi (\cos (t))^2
f_{\vartheta(u,v)}(t)dt=\int_0^\pi (\cos
(t))^2\frac{(\sin(t))^{m-2}}{I_{m-2}(\pi)}dt\\
&=&\int_0^\pi \left(1-(\sin
(t))^2\right)\frac{(\sin(t))^{m-2}}{I_{m-2}(\pi)}dt\\
&=&\int_0^\pi \frac{(\sin(t))^{m-2}-(\sin(t))^m}{I_{m-2}(\pi)}dt= \frac{I_{m-2}(\pi)-I_{m}(\pi)}{I_{m-2}(\pi)}\\
&=&
\frac{I_{m-2}(\pi/2)-I_{m}(\pi/2)}{I_{m-2}(\pi/2)}=1-\frac{m-1}{m}=\frac{1}{m}.
\end{eqnarray*}
The second equality above is due to equation (\ref{eq2}). Besides,
let
$$
J_m(T)=\int_0^T(\sin(t))^{m}\ln|\cos(t)|\,dt.
$$
Then,
\begin{eqnarray*}
J_m\left(\frac{\pi}{2}\right) &=&\int_0^\frac{\pi}{2}
(\sin(t))^{m}\ln\cos (t)dt=-\int_0^\frac{\pi}{2}
(\sin(t))^{m-1}\ln\cos
(t)d(\cos(t))\\
&=&\int_0^{\pi/2} \cos(t)\,d\left((\sin(t))^{m-1}\ln\cos (t)\right)\\
&=&\int_0^{\pi/2} -(\sin(t))^{m}+(m-1)(\cos(t))^2(\sin(t))^{m-2}\ln\cos (t)dt\\
&=&-I_m(\pi/2)+(m-1)J_{m-2}(\pi/2)-(m-1)J_{m}(\pi/2).
\end{eqnarray*}
Combining the above and equation (\ref{eq2}),
\begin{equation}\label{eq4}
\frac{J_m(\pi/2)}{I_m(\pi/2)}=\frac{J_{m-2}(\pi/2)}{I_{m-2}(\pi/2)}-\frac{1}{m}.
\end{equation}
Besides, it is easy to check that
\begin{equation}\label{eq5}
J_0\left(\frac{\pi}{2}\right) =\frac{-\pi\ln 2}{2}\qquad\mbox { and
}\qquad J_1\left(\frac{\pi}{2}\right) = -1.
\end{equation}
Combining equations (\ref{eq3}), (\ref{eq4}) and (\ref{eq5}), we
have
$$
\frac{J_m(\pi/2)}{I_m(\pi/2)}=\left\{ \begin{array}{ll}
-\frac{1}{m}-\frac{1}{m-2}-...-\frac{1}{3}-1\qquad&\mbox{ if $m$ is
odd}\\
-\frac{1}{m}-\frac{1}{m-2}-...-\frac{1}{2}-\ln 2\qquad&\mbox{ if $m$
is even}
\end{array}
\right..
$$
The proof is completed since
$\E(\ln|\cos\vartheta(u,v)|)=\frac{J_{m-2}(\pi/2)}{I_{m-2}(\pi/2)}$.
\end{proof}
\subsection{Proof of corollary \ref{co1}}
\begin{proof}
Denote by $\nabla f(x)$ the gradient of $f$. By Taylor expansion,
\begin{equation}\label{eq6}
f(x')=f(x)+(x'-x)^T\nabla f(x)+O(\|x'-x\|^2).
\end{equation}
Combining the definitions of $\N(x)$ and $\P(x, \delta)$ and
equation (\ref{eq6}),
\begin{eqnarray}
\N(x) &=&
\lim_{\delta\rightarrow 0}\sup_{x'\in\P(x,
\delta)}\frac{|(x'-x)^T\nabla f(x)|}{\delta|f(x)|}
=\frac{\|x\|\,\|\nabla f(x)\|}{|f(x)|}.\label{eq10}
\end{eqnarray}
Combining the definitions of $\NA(x)$ and $\P(x, \delta)$  and
equation (\ref{eq6}),
\begin{eqnarray}
&&\NA(x)\,=\,\,
\lim_{\delta\rightarrow 0}\underset{x'\sim\P(x, \delta)}{\E}\left(
\frac{(x'-x)^T\nabla f(x)}{\delta|f(x)|}\right)\\
&=&\|\nabla f(x)\|\, \lim_{\delta\rightarrow 0}\underset{x'\sim\P(x,
\delta)}{\E}\left( \frac{\|x'-x\|\times|\cos\vartheta(\nabla f(x),
x'-x)|}{\delta|f(x)|}\right).\label{eq7}
\end{eqnarray}
Since $x'\sim\P(x, \delta)=\B(x,\delta\|x\|)$,
by lemma \ref{l1},
\begin{eqnarray}
\underset{x'\sim\P(x,
\delta)}{\E}\|x'-x\|=\frac{m\delta\,\|x\|}{m+1}.\label{eq8}
\end{eqnarray}
By lemma \ref{l2},
\begin{eqnarray}
\underset{x'\sim\P(x, \delta)}{\E}|\cos\vartheta(\nabla f(x),
x'-x)|=\left\{
\begin{array}{ll}
\left(\frac{(m-2)(m-4)...1}{(m-1)(m-3)...2}\right)   &\mbox{ if $m$ is odd}\\
\left(\frac{(m-2)(m-4)...2}{(m-1)(m-3)...1}\right)\left(\frac{2}{\pi}\right)
&\mbox{ if $m$ is even}
\end{array}
\right..\label{eq9}
\end{eqnarray}
Combining equations (\ref{eq7}), (\ref{eq8}), (\ref{eq9}) and
(\ref{eq10}), we have
$$
\frac{\NA(x)}{\N(x)}=\left\{
\begin{array}{ll}
\left(\frac{(m)(m-2)...1}{(m+1)(m-1)...2}\right)   \qquad&\mbox{if $m$ is odd}\\
\left(\frac{(m)(m-2)...2}{(m+1)(m-1)...1}\right)\left(\frac{2}{\pi}\right)
\qquad&\mbox{if $m$ is even}
\end{array}
\right..
$$
Similarly, applying lemmas \ref{l1} and \ref{l2}, it can be shown
that
$$
\frac{\LNA(x)-\ln\N(x)}{\ln e}=\left\{ \begin{array}{ll}
-\frac{1}{m}-\frac{1}{m-2}-...-\frac{1}{3}-1\qquad&\mbox{ if $m$ is
odd}\\
-\frac{1}{m}-\frac{1}{m-2}-...-\frac{1}{2}-\ln 2\quad&\mbox{ if $m$
is even}
\end{array}
\right..
$$
\end{proof}
\subsection{Proof of Theorem \ref{main1}}
\begin{proof}
Denote by $f_j(x)$ the $j$th entry of $f(x)$. Denote by $\nabla
f_j(x)$ the gradient of $f_j$. By Taylor Expansion,
$$
f_j(x')=f_j(x)+(x'-x)^T\nabla f_j(x)+O(\|x'-x\|^2).
$$
Let $G=[\nabla f_1(x), \nabla f_2(x), ..., \nabla
f_n(x)]\in\R^{m\times n}$. So,
\begin{equation}\label{eq11}
f(x')=f(x)+G^T(x'-x)+O(\|x'-x\|^2).
\end{equation}
Combining the definitions of $\N(x)$ and $\P(x, \delta)$ and
equation (\ref{eq11}),
\begin{eqnarray}
\N(x) &=& \lim_{\delta\rightarrow 0}\sup_{x'\in\P(x,
\delta)}\frac{\|G^T(x'-x)\|}{\delta\|f(x)\|}
=\frac{\|x\|\,\|G\|}{\|f(x)\|}.\label{eq12}
\end{eqnarray}
Combining the definition of $\NA(x)$ and equation (\ref{eq11}),
\begin{eqnarray}
&&\NA(x)\,=\,\, \lim_{\delta\rightarrow 0}\underset{x'\sim\P(x,
\delta)}{\E}\left(
\frac{\|G^T(x'-x)\|}{\delta\|f(x)\|}\right).\label{eq13}
\end{eqnarray}
Let $UDV$ be the singular value decomposition of $G^T$, i.e. $U,
V\in\R^{n\times n}$ are orthogonal matrices, $D\in\R^{n\times m}$ is
a diagonal matrix with entries $\sigma_1, \sigma_2, ...\sigma_k$ on
its diagonal where  $k=\min\{m,n\}$,
\begin{equation}
G^T=UDV \qquad\mbox{ and }\qquad \|G\|=\sigma_1\geq \sigma_2\geq...
\sigma_k\geq 0.\label{eq14}
\end{equation}
Since $U$ is orthogonal, by equations (\ref{eq13}) and (\ref{eq14}),
$$
\NA(x)= \lim_{\delta\rightarrow 0}\underset{x'\sim\P(x,
\delta)}{\E}\left( \frac{\|UDV(x'-x)\|}{\delta\|f(x)\|}\right) =
\lim_{\delta\rightarrow 0}\underset{x'\sim\P(x, \delta)}{\E}\left(
\frac{\|DV(x'-x)\|}{\delta\|f(x)\|}\right). \label{eq15}
$$
Let $x''=V(x'-x)$. By the definition of $\P(x, \delta)$, $x''\sim
B^m(0, \delta\|x\|)$.  So,
\begin{eqnarray}
\NA(x) & = &\lim_{\delta\rightarrow 0}\underset{x''\sim B^m(0,
\delta\|x\|)}{\E}\left( \frac{\|Dx''\|}{\delta\|f(x)\|}\right).
\label{eq16} \\
\NA(x)^2&\leq& \lim_{\delta\rightarrow 0}\underset{x''\sim B^m(0,
\delta\|x\|)}{\E}\left(
\frac{\sigma_1^2{x''}_1^2+\sigma_2^2{x''}_2^2+...+\sigma_k^2{x''}_k^2}{\delta^2\|f(x)\|^2}\right)\\
&\leq& \sigma_1^2\,\lim_{\delta\rightarrow 0}\underset{x''\sim
B^m(0, \delta\|x\|)}{\E}\left(
\frac{{x''}_1^2+{x''}_2^2+...+{x''}_k^2}{\delta^2\|f(x)\|^2}\right).
\end{eqnarray}
Let $v$ be the vector in $\R^m$ with the first $k$ entries equal to
$1$ and $0$ elsewhere.
\begin{eqnarray}
\NA(x)^2 &\leq& \sigma_1^2\,\lim_{\delta\rightarrow
0}\underset{x''\sim B^m(0, \delta\|x\|)}{\E}\left(
\frac{(v^Tx'')^2}{\delta^2\|f(x)\|^2}\right)\\
&=& \sigma_1^2\|v\|^2\,\lim_{\delta\rightarrow 0}\underset{x''\sim
B^m(0, \delta\|x\|)}{\E}\left(
\frac{(\cos\theta(v,x''))^2\|x''\|^2}{\delta^2\|f(x)\|^2}\right)\\
&=& \left( \frac{\sigma_1^2\|v\|^2\|x\|^2}{(m+2)\|f(x)\|^2}\right)
\qquad\mbox{By lemmas \ref{l1} and  \ref{l2}}.\qquad\label{eq17}
\end{eqnarray}
Since $\sigma_1 =\|G\|$ and $\|v\|=\sqrt k$, by equations
(\ref{eq12}) and (\ref{eq17})
$$
\frac{\NA(x)}{\N(x)}\leq\sqrt \frac{ k}{m+2}.
$$
Since $\log(\cdot)$ is concave function, 
$$
\LNA(x)-\log \N(x)\leq \frac{\log k-\log (m+2)}{2}.
$$
On the other hand, by equation (\ref{eq16})
\begin{eqnarray*}
\LNA(x)&\geq& \lim_{\delta\rightarrow 0}\underset{x''\sim B^m(0,
\delta\|x\|)}{\E}\log\left(
\frac{\sigma_1{x''}_1}{\delta\|f(x)\|}\right).
\end{eqnarray*}
Let $e_1$ be the vector in $\R^m$ with the first entry to $1$ and
$0$ elsewhere.
\begin{eqnarray*}
\LNA(x)&\geq& \lim_{\delta\rightarrow 0}\underset{x''\sim B^m(0,
\delta\|x\|)}{\E}\log\left(
\frac{\sigma_1e_1^Tx''}{\delta\|f(x)\|}\right)\\
&=& \lim_{\delta\rightarrow 0}\underset{x''\sim B^m(0,
\delta\|x\|)}{\E}\log\left( \frac{\sigma_1\|x''\|\, |\cos\theta(e_1,
x'')|}{\delta\|f(x)\|}\right).
\end{eqnarray*}
By equation (\ref{eq12}) and lemmas \ref{l1} and \ref{l2},
\begin{eqnarray*}
 \frac{\LNA(x)-\log\N(x)}{\log e}&\geq&\left\{ \begin{array}{ll}
-\frac{1}{m}-\frac{1}{m-2}-...-\frac{1}{3}-1\,&\mbox{ if $m$ is
odd}\\
-\frac{1}{m}-\frac{1}{m-2}-...-\frac{1}{2}-\ln 2\,&\mbox{ if $m$ is
even}
\end{array}
\right..
\end{eqnarray*}
Since $\ln m=$ the area of the region $\{(x,y):\,1\leq x\leq n,
0\leq y\leq 1/x\}$,
\begin{eqnarray*}
 \frac{\LNA(x)-\log\N(x)}{\log e}
  &\geq&\left\{ \begin{array}{ll} -\frac{1}{2}\ln
m-1\qquad&\mbox{ if $m$ is
odd}\\
-\frac{1}{2}(\ln m-\ln 2)-\frac{1}{2}-\ln 2\,&\mbox{ if $m$ is even}
\end{array}
\right.\\
&\geq&-\frac{1}{2}\ln m-1.\\
\frac{\NA(x)}{\N(x)}&\geq&\frac{1}{e\sqrt m}\quad \mbox{since
$\log(\cdot)$ is a concave function}.
\end{eqnarray*}
\end{proof}

\section{Proof of Theorem \ref{main2}}\label{s2}
We write $Z\sim N(0,1)$ if $Z$ is random variable following standard
normal  distribution. Below is the well-known Berry-Esseen theorem
(See \cite{F72}).
\begin{theorem}
Let $u_1, ..., u_m$ be i.i.d. random variables with $\E(u_1)=0 $,
$\E(u_1^2)=\sigma $ and $\E(|u_1|^3)=\rho<\infty$. Then, for any
real number $a$,
$$
\left|\Prob\left(a<\frac{u_1+ ...+ u_m}{\sigma\sqrt m}\right) -
\Prob (a<Z)\right|\leq \frac{\ci\rho}{\sigma^3\sqrt m},
$$
where $Z\sim N(0,1)$ and $\ci$ is a universal constant (independent
of $m$).
\end{theorem}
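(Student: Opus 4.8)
The final statement is the classical Berry--Esseen theorem; since the paper attributes it to Feller, the route I would take is the Fourier-analytic one, combining the characteristic-function expansion of the normalized sum with Esseen's smoothing inequality. First I would reduce to the standardized case. Replacing each $u_i$ by $u_i/\sigma$ leaves the normalized sum $(u_1+\cdots+u_m)/(\sigma\sqrt m)$ unchanged and turns the quantity $\rho/\sigma^3$ into the third absolute moment of the rescaled variable, so without loss of generality I may assume $\E(u_1^2)=1$ and reduce the claim to showing $\sup_a|F_m(a)-\Phi(a)|\le \ci\rho/\sqrt m$, where $F_m$ is the distribution function of $S_m=(u_1+\cdots+u_m)/\sqrt m$, where $\Phi$ is the standard normal distribution function, and where $\rho=\E|u_1|^3$. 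Note $\rho\ge 1$ by the power-mean inequality, and if $\ci\rho/\sqrt m\ge 1$ the inequality is trivial, so I may assume $m$ is large relative to $\rho^2$.

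The central tool is Esseen's smoothing lemma: since the normal density is bounded by $M=1/\sqrt{2\pi}$, for every $T>0$,
$$
\sup_a|F_m(a)-\Phi(a)|\le \frac{1}{\pi}\int_{-T}^{T}\left|\frac{\varphi_m(t)-e^{-t^2/2}}{t}\right|\,dt+\frac{24M}{\pi T},
$$
where $\varphi_m$ is the characteristic function of $S_m$ and $e^{-t^2/2}$ that of $Z$. This converts the awkward supremum-distance between distribution functions into an integral of the difference of characteristic functions, which is what makes the estimate tractable.

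Next I would estimate $\varphi_m$. Writing $f(t)=\E\,e^{itu_1}$, one has $\varphi_m(t)=f(t/\sqrt m)^m$. Using the moment conditions $\E u_1=0$, $\E u_1^2=1$ together with the elementary bound $|e^{ix}-1-ix+x^2/2|\le |x|^3/6$, I would expand $f(s)=1-s^2/2+\theta(s)$ with $|\theta(s)|\le \rho|s|^3/6$. Combining this with the factorization $|z^m-w^m|\le m\,|z-w|\,\max(|z|,|w|)^{m-1}$ and controlling $|f(t/\sqrt m)|$ from above, I would derive, on the range $|t|\le T:=\sqrt m/(4\rho)$, an estimate of the form
$$
\left|\varphi_m(t)-e^{-t^2/2}\right|\le C\,\frac{\rho\,|t|^3}{\sqrt m}\,e^{-t^2/3}.
$$
Dividing by $|t|$ and integrating over $\R$ then bounds the first term of the smoothing inequality by $\tfrac{C}{\pi}\tfrac{\rho}{\sqrt m}\int_{\R}t^2e^{-t^2/3}\,dt=C_1\rho/\sqrt m$, while with $T=\sqrt m/(4\rho)$ the remainder term is $24M/(\pi T)=C_2\rho/\sqrt m$; adding gives the theorem with $\ci=C_1+C_2$, a universal constant.

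The hard part is the uniform characteristic-function estimate displayed above: one must keep the error Gaussian-damped by $e^{-t^2/3}$ across the entire range $|t|\le T$. This requires a bound such as $|f(t/\sqrt m)|\le e^{-t^2/(3m)}$ valid for $|t|\le T$, so that the factor $\max(|z|,|w|)^{m-1}$ stays exponentially controlled, while simultaneously ensuring the cubic remainder from the Taylor expansion does not overwhelm the quadratic main term near the upper end of the range. Once this single estimate is secured, both the smoothing lemma and the final integration are routine.
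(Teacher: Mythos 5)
The paper offers no proof of this statement at all: it is quoted as the classical Berry--Esseen theorem with a pointer to Feller \cite{F72}, and the paper's only original content nearby is the easy Lemma \ref{conver} specializing it to uniform variables. So there is nothing in the paper to deviate from; what you have done is reconstruct the standard proof, and your route --- Esseen's smoothing inequality plus a Taylor expansion of the characteristic function --- is precisely the argument given in the cited source (Feller, Vol.~II, Ch.~XVI). As a sketch it is sound: the reduction to $\sigma=1$, the remark that $\rho\geq 1$ (so the claim is vacuous unless $m\gg\rho^2$), the cutoff $T=\sqrt m/(4\rho)$, and the smoothing bound with $M=1/\sqrt{2\pi}$ are all the standard ingredients, and the one step you defer --- the estimate $|\varphi_m(t)-e^{-t^2/2}|\leq C\rho|t|^3e^{-t^2/3}/\sqrt m$ on $|t|\leq T$ --- is exactly where the work lies. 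It does close in the standard way: from $|e^{ix}-1-ix+x^2/2|\leq|x|^3/6$ one gets, for $|s|\leq 1/(4\rho)$,
$$
|f(s)|\;\leq\; 1-\frac{s^2}{2}+\frac{\rho|s|^3}{6}\;\leq\; 1-\frac{s^2}{3}\;\leq\; e^{-s^2/3},
$$
which supplies the damping factor for $\max(|z|,|w|)^{m-1}$ in your factorization, and the cubic remainder stays below the quadratic main term on this range by the same inequality; you have correctly identified this as the crux rather than glossing over it. Two minor remarks: the paper's hypothesis reads $\E(u_1^2)=\sigma$, evidently a typo for $\sigma^2$, and your rescaling silently adopts the correct normalization; and since the theorem as stated bounds one-sided probabilities $\Prob(a<\cdot)$, your uniform bound on $\sup_a|F_m(a)-\Phi(a)|$ dominates it, so the conclusion follows.
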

Calculated values of the constant $\ci$ have decreased markedly over
the years, from $7.59$ (Esseen's original bound) to $0.7975$ in 1972
(by P. van Beeck). The best current bound is $0.7655$ (by I. S.
Shiganov in 1986). The lemma below follows Berry-Esseen theorem
immediately.
\begin{lemma}\label{conver}
Let $Z\sim N(0,1)$ and $u_1, ..., u_m\sim[-1, 1]$ be i.i.d. random
variables. Then, for any real number $a$,
$$
\left|\Prob\left(a<\frac{u_1+ ...+ u_m}{\sqrt {m/3}}\right) - \Prob
(a<Z)\right|\leq \frac{1}{\sqrt m}.
$$
\end{lemma}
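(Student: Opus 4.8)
The plan is to apply the Berry--Esseen theorem directly to $u_1,\dots,u_m$ drawn uniformly from $[-1,1]$. Since the hypotheses ($\E(u_1)=0$ and finite third moment) are immediate, the whole proof reduces to three steps: compute the standard deviation $\sigma=\sqrt{\E(u_1^2)}$ and the third absolute moment $\rho=\E(|u_1|^3)$, substitute them into the Berry--Esseen bound, and verify that the resulting numerical constant does not exceed $1$.

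First I would evaluate the moments. By symmetry $\E(u_1)=0$. Integrating against the density $\tfrac12$ on $[-1,1]$ gives $\E(u_1^2)=\int_{-1}^{1}\tfrac12 t^2\,dt=\tfrac13$, so $\sigma=1/\sqrt 3$; consequently $\sigma\sqrt m=\sqrt{m/3}$, which is exactly the normalization appearing in the statement (so here $\sigma$ must be read as the standard deviation rather than the variance). Likewise $\rho=\E(|u_1|^3)=\int_0^1 t^3\,dt=\tfrac14$.

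Next I would substitute into the Berry--Esseen estimate $\ci\rho/(\sigma^3\sqrt m)$. Since $\rho/\sigma^3=(1/4)/(1/(3\sqrt 3))=3\sqrt 3/4$, the bound becomes $(3\sqrt 3/4)\,\ci/\sqrt m$, so the desired inequality $\bigl|\Prob(a<(u_1+\dots+u_m)/\sqrt{m/3})-\Prob(a<Z)\bigr|\le 1/\sqrt m$ follows the instant one knows that $(3\sqrt 3/4)\,\ci\le 1$.

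The main, and indeed the only, obstacle is this numerical inequality: it requires $\ci\le 4/(3\sqrt 3)\approx 0.7698$. This is precisely why the sharp value of the Berry--Esseen constant matters here — Esseen's original $7.59$, and even van Beeck's $0.7975$, both violate it. Shiganov's bound $\ci=0.7655<0.7698$ just suffices, yielding $(3\sqrt 3/4)\cdot 0.7655\approx 0.9945<1$, whence the bound is at most $1/\sqrt m$ as claimed. The thin margin (about $0.9945$) is exactly what forces one to invoke the best available constant rather than any earlier, cruder one.
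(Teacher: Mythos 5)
Your proof is correct and is essentially the paper's own argument: compute $\sigma^2=1/3$ and $\rho=1/4$ for the uniform law on $[-1,1]$, substitute into Berry--Esseen to get the bound $\frac{3^{1.5}\,\ci}{4\sqrt m}$, and check it is at most $1/\sqrt m$. Your numerical verification is in fact the more careful one: the paper's displayed approximation $\frac{0.9555\,\ci}{\sqrt m}$ is an arithmetic slip (the true factor is $3\sqrt3/4\approx 1.299$, so the bound is $\approx 0.9944/\sqrt m$ only after inserting Shiganov's $\ci=0.7655$), and your explicit observation that the inequality needs $\ci\le 4/(3\sqrt 3)\approx 0.7698$ --- so that van Beeck's $0.7975$ would not suffice --- is precisely the point the paper leaves implicit.
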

\begin{proof}
Since $u_m\sim[-1, 1]$, $\rho=1/4$ and $\sigma^2=1/3$. By
Berry-Esseen theorem,
$$
\left|\Prob\left(a<\frac{u_1+ ...+ u_m}{\sqrt {m/3}}\right) - \Prob
(a<Z)\right|\leq \frac{3^{1.5}\ci  }{4\sqrt m}\approx\frac{0.9555
\ci }{\sqrt m}.
$$
\end{proof}
\begin{lemma}\label{l7}
Let $\delta, b$ be positive numbers, s.t. $b>1$. Let $Z\sim N(0,
1)$. Then
$$
\delta\ln \delta+\int_0^{b}\Prob(Z>z)\ln
\left|\frac{z+\delta}{z-\delta}\right|dz> 0.
$$
\end{lemma}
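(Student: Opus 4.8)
The plan is to reduce to the single hardest instance and then carry out one explicit integration. First note that for $z,\delta>0$ one has $|z+\delta|>|z-\delta|$, so the weight $\ln\left|\frac{z+\delta}{z-\delta}\right|$ is nonnegative on $(0,b)$ (with an integrable logarithmic singularity at $z=\delta$) while $\Prob(Z>z)>0$; hence the integral term is strictly increasing in $b$. Consequently it suffices to prove the inequality for the limiting value $b=1$ in the weak form ``$\geq 0$'', after which every $b>1$ yields the strict inequality claimed. Moreover, if $\delta\geq 1$ then $\delta\ln\delta\geq 0$ and the integral is positive, so that case is immediate; from now on assume $0<\delta<1$.

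Next I would replace $\Prob(Z>z)$ by a clean lower bound. The map $z\mapsto\Prob(Z>z)$ is convex on $[0,\infty)$ since $\frac{d^2}{dz^2}\Prob(Z>z)=\frac{z}{\sqrt{2\pi}}e^{-z^2/2}\geq 0$, so it lies above its tangent line at $0$, giving $\Prob(Z>z)\geq \frac12-\frac{z}{\sqrt{2\pi}}$ for all $z\geq0$. Because the weight is nonnegative, this reduces the $b=1$ claim to
\[
\int_0^1\left(\tfrac12-\tfrac{z}{\sqrt{2\pi}}\right)\ln\left|\tfrac{z+\delta}{z-\delta}\right|\,dz\ \geq\ -\delta\ln\delta .
\]

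The decisive computation is the ``$\tfrac12$'' piece, which I would evaluate exactly rather than bound. Antidifferentiating (the antiderivative of the weight is $(z+\delta)\ln(z+\delta)-(z-\delta)\ln|z-\delta|$) gives
\[
\int_0^1\ln\left|\tfrac{z+\delta}{z-\delta}\right|dz=(1+\delta)\ln(1+\delta)-(1-\delta)\ln(1-\delta)-2\delta\ln\delta,
\]
so the term $-\delta\ln\delta$ is cancelled exactly and the whole problem collapses to the single-variable inequality
\[
\tfrac12\big[(1+\delta)\ln(1+\delta)-(1-\delta)\ln(1-\delta)\big]\ \geq\ \tfrac{1}{\sqrt{2\pi}}\int_0^1 z\,\ln\left|\tfrac{z+\delta}{z-\delta}\right|dz ,
\]
whose right-hand integral is again elementary. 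Both sides are explicit functions of $\delta$ on $(0,1)$, and I would finish by calculus: as $\delta\to0^+$ the two sides are asymptotic to $\delta$ and $\tfrac{2}{\sqrt{2\pi}}\delta\approx0.80\,\delta$, and at $\delta=1$ they equal $\ln 2\approx0.69$ and $\tfrac{1}{\sqrt{2\pi}}\approx0.40$, so the inequality holds with room at both ends; the interior is controlled by differentiating in $\delta$ or by a power-series comparison of the two sides.

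The main obstacle is precisely this last one-variable inequality, and the reason it is delicate is the leading-order cancellation: as $\delta\to0^+$ both $-\delta\ln\delta$ and the full integral behave like $\delta\ln(1/\delta)$, so any argument that is merely asymptotic, or that estimates the ``$\tfrac12$'' piece instead of computing it exactly, loses the sign. The safe route is the exact evaluation above, which isolates the genuinely positive surplus $\tfrac12[(1+\delta)\ln(1+\delta)-(1-\delta)\ln(1-\delta)]$ and reduces matters to comparing two concrete elementary functions; the only remaining care is to confirm that the linear correction coming from the lower bound on $\Prob(Z>z)$ does not consume this surplus, i.e.\ that the displayed single-variable inequality really holds throughout $(0,1)$.
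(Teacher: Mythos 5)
Your reductions are all correct, and they follow a genuinely different route from the paper. The paper's proof never estimates the Gaussian tail at all: it introduces $F(\delta)=\int_0^{b}\bigl[(z+\delta)\ln|z+\delta|-(z-\delta)\ln|z-\delta|\bigr]\,d\,\Prob(Z>z)$, integrates by parts (the hypothesis $b>1$ enters only to make the boundary term at $z=b$ positive, so it can be discarded), and then shows $F(0)=0$ and $F'(\delta)<0$, so that $F(\delta)\leq 0$ forces the stated inequality. You instead use $b>1$ to cut the integral down to $[0,1]$, bound $\Prob(Z>z)$ below by its tangent line $\tfrac12-\tfrac{z}{\sqrt{2\pi}}$ (valid, since the tail is convex on $[0,\infty)$), and evaluate the $\tfrac12$-piece in closed form so that $-\delta\ln\delta$ cancels exactly. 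Those steps, the easy case $\delta\geq 1$, the reduction of strictness to monotonicity in $b$, and the two endpoint values you quote (at $\delta=1$ one indeed has $\int_0^1 z\ln\frac{1+z}{1-z}\,dz=1$) are all right.

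However, the proposal stops short of a proof at precisely the step you flag as the crux: the single-variable inequality
$$
\frac12\bigl[(1+\delta)\ln(1+\delta)-(1-\delta)\ln(1-\delta)\bigr]\;\geq\;\frac{1}{\sqrt{2\pi}}\int_0^1 z\,\ln\left|\frac{z+\delta}{z-\delta}\right|dz,
\qquad 0<\delta<1,
$$
is supported only by asymptotics at $\delta\to 0^+$ and the value at $\delta=1$; endpoint checks, however comfortable, cannot exclude a crossing in the interior, and ``differentiating in $\delta$'' is a plan rather than an argument. This is a genuine gap, though a fillable one. The right-hand integral is elementary,
$$
\int_0^1 z\,\ln\left|\frac{z+\delta}{z-\delta}\right|dz
=\frac{1-\delta^2}{2}\,\ln\frac{1+\delta}{1-\delta}+\delta ,
$$
and expanding both sides in odd powers of $\delta$ gives, with $c=\sqrt{2/\pi}<1$, left side $=\delta-\sum_{j\geq1}\frac{\delta^{2j+1}}{2j(2j+1)}$ and right side $=c\,\delta-c\sum_{j\geq1}\frac{\delta^{2j+1}}{(2j-1)(2j+1)}$, so the claim reduces to
$$
(1-c)\,\delta\;\geq\;\sum_{j\geq1}\frac{2j(1-c)-1}{2j(2j-1)(2j+1)}\,\delta^{2j+1}.
$$
The coefficients for $j=1,2$ are negative and may be dropped; for $j\geq3$ use $\delta^{2j+1}\leq\delta$ and $\frac{2j(1-c)-1}{2j}\leq 1-c$ to bound the remaining sum by $(1-c)\,\delta\sum_{j\geq3}\frac{1}{(2j-1)(2j+1)}=\frac{1}{10}(1-c)\,\delta$. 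With this (or an equivalent) computation included your proof is complete; without it, the lemma has not been established.
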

\begin{proof}
Let
\begin{eqnarray*}
F(\delta)&=&\int_0^{b}(z+\delta)\ln|z+\delta|-(z-\delta)\ln|z-\delta|\,d\,\Prob(Z>z).
\end{eqnarray*}
Since $\Prob(Z>0)=1/2$, by integration by parts,
\begin{eqnarray*}
&&F(\delta)\,=\,[(b+\delta)\ln|b+\delta|-(b-\delta)\ln|b-\delta|]\Prob(Z>b)\\
&&-\delta\ln\delta-\int_0^{b}\Prob(Z>z)\,d\,[(z+\delta)\ln|z+\delta|-(z-\delta)\ln|z-\delta|].
\end{eqnarray*}
Since $(b+\delta)\ln|b+\delta| =
2\delta\ln|b+\delta|+(b-\delta)\ln|b+\delta|>(b-\delta)\ln|b-\delta|$,
$F(\delta)$
\begin{eqnarray}
&>&-\delta\ln\delta-\int_0^{b}\Prob(Z>z)\,d[(z+\delta)\ln|z+\delta|-(z-\delta)\ln|z-\delta|]\qquad\\
&=&-\delta\ln\delta-\int_0^{b}\Prob(Z>z)\ln
\left|\frac{z+\delta}{z-\delta}\right|dz.\label{eq38}
\end{eqnarray}
Obviously, $F(0)=0$. Besides,
\begin{eqnarray*}
\frac{dF(\delta)}{d\delta}&=&\int_0^{b}\frac{d(z+\delta)\ln|z+\delta|}{d\delta}-\frac{d(z-\delta)\ln|z-\delta|}{d\delta}\,d\,\Prob(Z>z)\\
&=&\int_0^{b}\ln|z+\delta|-\ln|z-\delta|\,d\,\Prob(Z>z)\\
&=&-\int_0^{b}\ln\left|\frac{z+\delta}{z-\delta}\right|f_Z(z)\,dz<0\qquad\mbox{
since $\left|\frac{z+\delta}{z-\delta}\right|\geq 1, \forall z\geq
0$}.
\end{eqnarray*}
So, $F(\delta)\leq 0$. Together with equation (\ref{eq38}), the
proof is completed.
\end{proof}
\begin{lemma}\label{l4}
Let $\delta\leq \sqrt {3m}$ be a positive number. Let $u_1, ...,
u_m\sim[-1, 1]$ be i.i.d. random variables. Then
\begin{eqnarray*}
&&\E\left(\left(\frac{u_1+ ...+ u_m}{\sqrt
{m/3}}+\delta\right)\ln\left|\frac{u_1+ ...+ u_m}{\sqrt
{m/3}}+\delta\right|\right)\\
&>&\frac{-2\delta}{{\sqrt m}}\left(\ln \left(1+\frac{\sqrt
{3m}}{\delta}\right)+1\right). \end{eqnarray*}
\end{lemma}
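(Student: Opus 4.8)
The plan is to transplant the integration-by-parts identity underlying Lemma \ref{l7} from the Gaussian variable $Z$ to the standardized sum $T=\frac{u_1+\cdots+u_m}{\sqrt{m/3}}$, and then to control the discrepancy between the two distributions through the Berry--Esseen estimate of Lemma \ref{conver}. Write $a=\sqrt{3m}$, so $T$ takes values in $[-a,a]$ and is symmetric about $0$ because each $u_i$ is symmetric. Keep the function $H(s)=(s+\delta)\ln|s+\delta|-(s-\delta)\ln|s-\delta|$ from the proof of Lemma \ref{l7}, whose derivative is $H'(s)=\ln\left|\frac{s+\delta}{s-\delta}\right|$ and whose value at $0$ is $H(0)=2\delta\ln\delta$.

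First I would fold the expectation onto the positive half-line. Substituting $t\mapsto -t$ on $[-a,0]$ and using $f_T(-t)=f_T(t)$ converts $\E\left((T+\delta)\ln|T+\delta|\right)$ into $\int_0^a H(s)f_T(s)\,ds$. Writing $f_T(s)\,ds=-\,d\,\Prob(T>s)$ and integrating by parts exactly as in Lemma \ref{l7}, the boundary term at $s=a$ vanishes since $\Prob(T>a)=0$, while the term at $s=0$ contributes $\delta\ln\delta$ because $\Prob(T>0)=\tfrac12$ and $H(0)=2\delta\ln\delta$. This gives the exact identity
$$
\E\left((T+\delta)\ln|T+\delta|\right)=\delta\ln\delta+\int_0^a \Prob(T>s)\,\ln\left|\frac{s+\delta}{s-\delta}\right|\,ds.
$$

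Next I would swap $\Prob(T>s)$ for $\Prob(Z>s)$. The key observation is that for $s\ge 0$ and $\delta>0$ we have $s+\delta\ge|s-\delta|$, so the integrand $\ln\left|\frac{s+\delta}{s-\delta}\right|$ is nonnegative; hence Lemma \ref{conver}, which yields $|\Prob(T>s)-\Prob(Z>s)|\le 1/\sqrt m$, lets me isolate the Gaussian part at the cost of an error bounded in absolute value by $\frac{1}{\sqrt m}\int_0^a\ln\left|\frac{s+\delta}{s-\delta}\right|ds$. The Gaussian contribution $\delta\ln\delta+\int_0^a\Prob(Z>s)\ln\left|\frac{s+\delta}{s-\delta}\right|ds$ is strictly positive by Lemma \ref{l7}, applicable since $b=a=\sqrt{3m}>1$. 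Combining,
$$
\E\left((T+\delta)\ln|T+\delta|\right) > -\frac{1}{\sqrt m}\int_0^a\ln\left|\frac{s+\delta}{s-\delta}\right|\,ds.
$$

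Finally I would evaluate the integral in closed form: elementary antiderivatives give $\int_0^a\ln\left|\frac{s+\delta}{s-\delta}\right|ds=(a+\delta)\ln(a+\delta)-(a-\delta)\ln(a-\delta)-2\delta\ln\delta$. Comparing this with the target bound $2\delta\left(\ln(1+a/\delta)+1\right)=2\delta\ln(a+\delta)-2\delta\ln\delta+2\delta$, cancelling the $\delta\ln\delta$ terms, and setting $x=\delta/a\in(0,1]$, the required estimate reduces to the scalar inequality $(1-x)\ln\frac{1+x}{1-x}\le 2x$. I expect this last step to be the main, though routine, obstacle. It follows by setting $\phi(x)=2x-(1-x)\ln\frac{1+x}{1-x}$ and checking $\phi(0)=\phi'(0)=0$ together with $\phi''(x)=\frac{2}{1-x^2}+\frac{2}{(1+x)^2}>0$, so that $\phi$ is convex with minimum at $0$ and hence nonnegative on $(0,1)$. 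Feeding this bound back, and recalling $a=\sqrt{3m}$, turns the previous display into the claimed strict lower bound.
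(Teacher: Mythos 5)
Your proposal is correct and follows essentially the same route as the paper's proof: fold onto the positive half-line by symmetry, integrate by parts against $\Prob(W>w)$, swap in the Gaussian tail via Lemma \ref{conver} (using nonnegativity of $\ln\left|\frac{s+\delta}{s-\delta}\right|$), invoke Lemma \ref{l7} for positivity of the Gaussian part, and evaluate the remaining integral in closed form. The only difference is cosmetic: your final scalar inequality $(1-x)\ln\frac{1+x}{1-x}\leq 2x$ is exactly the paper's bound $(\sqrt{3m}-\delta)\ln\left(\frac{\sqrt{3m}+\delta}{\sqrt{3m}-\delta}\right)\leq 2\delta$, which the paper dispatches in one line from $\ln t\leq t-1$ rather than by your convexity computation.
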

\begin{proof}
Let $W =(u_1+...+u_{m})\sqrt {3/m}$ and $f_W(w)$ be the probability
density function of $W$. By definition of expectation,
\begin{eqnarray*}
&&\E[(W+\delta)\ln |W+\delta|]= \int_{-\sqrt {3m}}^{\sqrt {3m}}(w+\delta)\ln |w+\delta|f_W(w)dw\\
&=& \int_{-\sqrt {3m}}^0(w+\delta)\ln
|w+\delta|f_W(w)dw+\int_0^{\sqrt {3m}}(w+\delta)\ln
|w+\delta|f_W(w)dw\\
&=&\int_0^{\sqrt {3m}}(w+\delta)\ln |w+\delta|f_W(w)dw-\int^0_{\sqrt
{3m}}(\delta-w)\ln
|\delta-w|f_W(-w)dw\\
&=& \int_0^{\sqrt {3m}}\left[(w+\delta)\ln |w+\delta|-(w-\delta)\ln
|w-\delta|\right]f_W(w)dw\\
&=& -\int_0^{\sqrt {3m}}(w+\delta)\ln |w+\delta|-(w-\delta)\ln
|w-\delta|\,\,d\,\Prob(W>w).
\end{eqnarray*}
Note that: when $w = 0$, $P(W>w) = 0.5$ and $(w+\delta)\ln
|w+\delta|=-(w-\delta)\ln |w-\delta|=\delta\ln \delta$. So, by
integration by parts, $\E[(W+\delta)\ln |W+\delta|]-\delta\ln
\delta$
\begin{eqnarray*}
&=& \int_0^{\sqrt {3m}}\Prob(W>w)\,\,d\, [(w+\delta)\ln
|w+\delta|-(w-\delta)\ln |w-\delta|]\\
&=& \int_0^{\sqrt {3m}}\Prob(W>w)\ln
\left|\frac{w+\delta}{w-\delta}\right|dw.
\end{eqnarray*}
Since $|w+\delta|\geq|w-\delta|$ for all $w>0$, by lemma
\ref{conver}, $\E[(W+\delta)\ln |W+\delta|]$
\begin{eqnarray*}
& \geq& \delta\ln \delta+\int_0^{\sqrt {3m}}\Prob(Z>z)\ln
\left|\frac{z+\delta}{z-\delta}\right|dz-\frac{1}{\sqrt
m}\int_0^{\sqrt {3m}}\ln \left|\frac{w+\delta}{w-\delta}\right|dw\\
& >& -\frac{1}{\sqrt m}\int_0^{\sqrt {3m}}\ln
\left|\frac{w+\delta}{w-\delta}\right|dw\qquad \mbox{by lemma
\ref{l7}}.
\end{eqnarray*}
So, $-\sqrt m\,\, \E[(W+\delta)\ln |W+\delta|]$
\begin{eqnarray*}
&< &\int_0^{\sqrt {3m}}\ln
\left|\frac{w+\delta}{w-\delta}\right|dw\,  =\, \int_0^{\sqrt
{3m}}\ln (w+\delta)dw-\int_0^{\sqrt {3m}}\ln
 \left|w-\delta\right|dw\\
 &=&[w\ln w-w]_\delta^{\sqrt {3m}+\delta}-\int_0^{\delta}\ln
 \left(\delta-w\right)dw-\int_\delta^{\sqrt {3m}}\ln
 \left(w-\delta\right)dw\\
 &=&[w\ln w-w]_\delta^{\sqrt {3m}+\delta}+[w\ln w-w]_\delta^0-[w\ln w-w]_0^{\sqrt {3m}-\delta}\\
 &=&(\sqrt {3m}+\delta)\ln (\sqrt {3m}+\delta)-(\sqrt {3m}-\delta)\ln (\sqrt
 {3m}-\delta)-2\delta\ln\delta\\
 &=&2\delta\ln (\sqrt {3m}+\delta)+(\sqrt {3m}-\delta)\ln \left(\frac{\sqrt
 {3m}+\delta}{\sqrt
 {3m}-\delta}\right)-2\delta\ln\delta.
\end{eqnarray*}
Since $\ln x\leq x-1$ for all $x>0$, $-\sqrt m\,\, \E[(W+\delta)\ln
|W+\delta|]$
\begin{eqnarray*}
&<&2\delta\ln (\sqrt {3m}+\delta)+(\sqrt
{3m}-\delta)\left(\frac{\sqrt
 {3m}+\delta}{\sqrt
 {3m}-\delta}-1\right)-2\delta\ln\delta\\
 &=&2\delta\ln (\sqrt {3m}+\delta)+2\delta-2\delta\ln\delta
 =2\delta\left(\ln \left(1+\frac{\sqrt {3m}}{\delta}\right)+1\right).
 \label{eq26}
\end{eqnarray*}
\end{proof}
\begin{lemma}\label{l5}
If $u_1, ...u_{m+1}\sim [-1,1]$ are i.i.d.,
$$
\E(\ln|u_1+...+u_{m+1}|)=\E((u_1+...+u_m+1)\ln|u_1+...+u_m+1|)-1.
$$\end{lemma}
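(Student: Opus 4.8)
The plan is to peel off the last summand $u_{m+1}$ by conditioning on the partial sum $S_m=u_1+\cdots+u_m$ and integrating the uniform variable $u_{m+1}$ out. Since $u_{m+1}\sim[-1,1]$ is independent of $S_m$, the tower property gives
$$
\E(\ln|u_1+\cdots+u_{m+1}|)=\E_{S_m}\left[\frac{1}{2}\int_{-1}^{1}\ln|S_m+t|\,dt\right].
$$
First I would evaluate the inner integral. Using the antiderivative $w\ln|w|-w$ of $\ln|w|$ together with the substitution $w=S_m+t$,
$$
\frac{1}{2}\int_{-1}^{1}\ln|S_m+t|\,dt=\frac{1}{2}\int_{S_m-1}^{S_m+1}\ln|w|\,dw=\frac{1}{2}(S_m+1)\ln|S_m+1|-\frac{1}{2}(S_m-1)\ln|S_m-1|-1.
$$

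Next I would take the outer expectation over $S_m$, so that
$$
\E(\ln|u_1+\cdots+u_{m+1}|)=\frac{1}{2}\E\left[(S_m+1)\ln|S_m+1|\right]-\frac{1}{2}\E\left[(S_m-1)\ln|S_m-1|\right]-1.
$$
The final step exploits symmetry. Each $u_i$ is symmetric about $0$, hence so is $S_m$; that is, $-S_m$ has the same distribution as $S_m$. Because the map $s\mapsto(s-1)\ln|s-1|$, composed with $s\mapsto-s$, becomes $-(s+1)\ln|s+1|$, this symmetry yields
$$
\E\left[(S_m-1)\ln|S_m-1|\right]=-\E\left[(S_m+1)\ln|S_m+1|\right].
$$
Substituting this into the previous display collapses the two terms into a single one and produces the claimed identity, since $S_m+1=u_1+\cdots+u_m+1$.

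The only point requiring care is the integrability of $\ln|w|$ near $w=0$: the inner interval of integration may contain $0$ (when $|S_m|\le 1$), and $\ln|S_m\pm1|$ may blow up when $S_m=\mp1$. However, $\ln|w|$ has only a logarithmic singularity, so $\int\ln|w|\,dw$ converges and the relevant expectations are finite. I expect this bookkeeping, together with justifying the interchange of the conditional integration with the outer expectation, to be the main (and rather mild) obstacle; the algebra driving the result is entirely the symmetry cancellation above.
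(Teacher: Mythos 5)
Your proposal is correct and follows essentially the same route as the paper: integrate out the last uniform variable $u_{m+1}$ via the antiderivative $w\ln|w|-w$ (the paper writes this as an explicit iterated integral rather than a conditional expectation, but it is the same step), then use the symmetry $-S_m\overset{d}{=}S_m$ to convert $\E[(S_m-1)\ln|S_m-1|]$ into $-\E[(S_m+1)\ln|S_m+1|]$ and collapse the two terms. The integrability remark you add is a minor point the paper leaves implicit.
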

\begin{proof}
For any fixed $a\in\R$,
\begin{eqnarray}
&&\int_{-1}^1\ln|a+u|du=\int_{a-1}^{a+1}\ln |v|dv\qquad\mbox{(let $v=a+u$)}\qquad\\
&=&\left[v\ln|v|-v\right]_{a-1}^{a+1}=(a+1)\ln|a+1|-(a-1)\ln|a-1|-2.\label{eq33}
\end{eqnarray}
So, by the definition of expectation, $\E(\ln|u_1+...+u_{m+1}|)$
\begin{eqnarray*}
&=&\frac{1}{2^{m+1}}\int_{-1}^1\cdots\int_{-1}^1\ln|u_1+...+u_{m+1}|du_{m+1}\cdots du_1\\
&=&\frac{1}{2^{m+1}}\int_{-1}^1\cdots\int_{-1}^1(u_1+...+u_{m}+1)\ln|u_1+...+u_{m}+1|du_m\cdots du_1\\
&-&\frac{1}{2^{m+1}}\int_{-1}^1\cdots\int_{-1}^1(u_1+...+u_{m}-1)\ln|u_1+...+u_{m}-1|du_m\cdots
du_1\\
&-&\frac{1}{2^{m+1}}\int_{-1}^1\cdots\int_{-1}^12du_m\cdots
du_1\qquad\mbox{by equation (\ref{eq33})}.
\end{eqnarray*}
So, $\E(\ln|u_1+...+u_{m+1}|)+1$
\begin{eqnarray}
&=&\frac{1}{2}\E((u_1+...+u_{m}+1)\ln|u_1+...+u_{m}+1|)\label{eq23}\\
&-&\frac{1}{2}\E((u_1+...+u_{m}-1)\ln|u_1+...+u_{m}-1|).
\end{eqnarray}
Since $u$ and $-u$ follow the same distribution,
\begin{eqnarray}
&&{\E}[(u_1+...+u_{m+1}-1)\ln|u_1+...+u_{m+1}-1|]\\
&=&{\E}[(-u_1-...-u_{m+1}-1)\ln|-u_1-...-u_{m+1}-1|]\\
&=&-{\E}[(u_1+...+u_{m+1}+1)\ln|u_1+...+u_{m+1}+1|].\label{eq24}
\end{eqnarray}
Combining equations (\ref{eq23}) and (\ref{eq24}), the proof is
completed.
\end{proof}
In this section, we follow the definition of $\varepsilon_m$ given
in theorem \ref{main2},
$$
\varepsilon_m =\frac{2+2\ln m}{\sqrt {m-1}}.
$$
\begin{corollary}\label{co2}
If $u_1, ..., u_{m+1}\sim[-1,1]$ are i.i.d. then
\begin{eqnarray*}
\E(\ln|u_1+...+u_{m+1}|)&>& \frac{\ln m}{2}-\frac{\ln 3}{2}-
1-\varepsilon_{m+1}.
\end{eqnarray*}
\end{corollary}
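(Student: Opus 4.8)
The plan is to obtain this bound as a purely algebraic consequence of the two preceding lemmas, requiring no new analysis. First I would invoke Lemma \ref{l5} to rewrite the target quantity as $\E(\ln|u_1+\cdots+u_{m+1}|) = \E[(S+1)\ln|S+1|] - 1$, where $S = u_1+\cdots+u_m$ is a sum of $m$ i.i.d. variables uniform on $[-1,1]$. This trades a logarithm of a sum of $m+1$ terms for exactly the $x\ln|x|$-type expression in $m$ terms that Lemma \ref{l4} is designed to control, so the remaining task is to bound $\E[(S+1)\ln|S+1|]$ from below.

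The key step is to put $S+1$ into the normalized form appearing in Lemma \ref{l4}. Setting $W = (u_1+\cdots+u_m)\sqrt{3/m}$ and $\delta = \sqrt{3/m}$, one has $S+1 = \sqrt{m/3}\,(W+\delta)$, so that
$$(S+1)\ln|S+1| = \sqrt{m/3}\,(W+\delta)\ln\sqrt{m/3} + \sqrt{m/3}\,(W+\delta)\ln|W+\delta|.$$
Taking expectations and using $\E W = 0$ (hence $\E(W+\delta)=\delta=\sqrt{3/m}$), the first term contributes exactly $\ln\sqrt{m/3} = \tfrac12(\ln m - \ln 3)$, which supplies the leading $\tfrac{\ln m}{2}-\tfrac{\ln 3}{2}$ of the claim.

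For the second term I would apply Lemma \ref{l4} with $\delta = \sqrt{3/m}$, after checking the hypothesis $\delta \le \sqrt{3m}$ (valid for $m\ge 1$). With this choice $\sqrt{3m}/\delta = m$, so the lemma bounds $\E[(W+\delta)\ln|W+\delta|]$ below by $-\tfrac{2\sqrt 3}{m}(\ln(1+m)+1)$; multiplying by the outer factor $\sqrt{m/3}$ makes the scaling constants collapse to exactly $-\tfrac{2}{\sqrt m}(\ln(1+m)+1) = -\varepsilon_{m+1}$. Combining this with the first term and subtracting the $1$ from Lemma \ref{l5} yields the stated inequality. The only real obstacle is bookkeeping: one must choose $\delta=\sqrt{3/m}$ precisely so that $\sqrt{3m}/\delta = m$ produces the $\ln(m+1)$ inside $\varepsilon_{m+1}$, and then verify that the three scaling factors $\sqrt{m/3}$, $\sqrt{3/m}$ and $1/\sqrt m$ cancel to reproduce the definition $\varepsilon_{m+1} = (2+2\ln(m+1))/\sqrt m$ exactly, rather than only up to an unspecified constant.
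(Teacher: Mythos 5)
Your proposal is correct and is essentially identical to the paper's own proof: the same reduction via Lemma \ref{l5}, the same normalization $W=(u_1+\cdots+u_m)\sqrt{3/m}$ with $\delta=\sqrt{3/m}$, the same split of $\ln\bigl(\sqrt{m/3}\,|W+\delta|\bigr)$, and the same application of Lemma \ref{l4} producing exactly $-\varepsilon_{m+1}$. The scaling bookkeeping you describe checks out, so nothing further is needed.
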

\begin{proof}
Let $W=(u_1+...+u_m)\sqrt {3/m}$.
\begin{eqnarray*}
&&\E((u_1+...+u_m+1)\ln|u_1+...+u_m+1|)\\
&=&\E\left(\left(W\sqrt \frac{m}{3}+1\right)\ln\left|W\sqrt \frac{m}{3}+1\right|\right)\\
&=&\sqrt \frac{m}{3}\, \E\left(\left(W+\sqrt\frac{3}{m}\right)\ln\left(\left|W+\sqrt\frac{3}{m}\,\right|\sqrt \frac{m}{3}\right)\right)\\
&=&\sqrt \frac{m}{3}\,
\E\left(\left(W+\sqrt\frac{3}{m}\right)\ln\left|W+\sqrt\frac{3}{m}\right|\right)
+\frac{\ln (m/3)}{2}\sqrt \frac{m}{3}\, \E\left(W+\sqrt\frac{3}{m}\right)\\
&>&\frac{\ln (m/3)}{2}\sqrt \frac{m}{3}\,
\E\left(W+\sqrt\frac{3}{m}\right)- \frac{2}{{\sqrt m}}\left(\ln
\left(m+1\right)+1\right) \,\mbox{ by lemma \ref{l4}}\\
&=&\frac{\ln (m/3)}{2}- \frac{2}{{\sqrt m}}\left(\ln
\left(m+1\right)+1\right) \qquad\mbox{ Since $\E(W)=0$}.
\end{eqnarray*}
Applying lemma \ref{l5}, the proof is completed.
\end{proof}
Denote by $e_n$ the vector in $\R^n$ with all entries $1$. For any
$a\in\R^m$, denote by $\|a\|_1=|a_1|+...+|a_m|$ the $1-$norm of $a$.
Below is a lemma from \cite{WWWS86},
\begin{lemma}\label{l6}
For any increasing function $\varphi:\, \R\rightarrow\R$, $a\in\R^m$
and $b\in\R$, s.t. $\|a\|_1=m$,
$$
\Prob(|a^Tu|>b)\geq
\Prob(|e_m^Tu|>b)\qquad\mbox{and}\qquad\E(\varphi(|a^Tu|)\geq\E(\varphi(|e_m^Tu|).
$$
\end{lemma}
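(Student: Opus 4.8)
The plan is to recognize both inequalities as consequences of a single \emph{peakedness} comparison: I will show that $e_m^Tu$ is more concentrated about $0$ than $a^Tu$, in the sense that $\Prob(|e_m^Tu|>t)\leq\Prob(|a^Tu|>t)$ for \emph{every} $t\in\R$. Taking $t=b$ gives the first assertion directly. For the second, this inequality says that the nonnegative random variable $|e_m^Tu|$ is stochastically dominated by $|a^Tu|$, so for any increasing $\varphi$ one has $\E(\varphi(|e_m^Tu|))\leq\E(\varphi(|a^Tu|))$ by the standard facts that stochastic dominance is preserved under increasing maps and implies ordering of expectations. Before starting I reduce to the case $a_i\geq 0$: since each $u_i$ is symmetric, $a_iu_i$ and $|a_i|u_i$ have the same law, so replacing $a_i$ by $|a_i|$ changes neither side; thus I may assume $a_i\geq 0$ with $\sum_i a_i=m$, which is also the coordinate sum of $e_m$.

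Next I would exploit majorization. Among all nonnegative vectors with coordinate sum $m$, the equal-weight vector $e_m=(1,\dots,1)$ is the smallest in the majorization order, so $a$ majorizes $e_m$. By the Hardy--Littlewood--P\'olya characterization, $e_m$ is obtained from $a$ by finitely many Robin Hood transfers, each of which fixes all but two coordinates and replaces a pair $(\alpha,\beta)$ with $\alpha>\beta$ by $(\alpha-\varepsilon,\beta+\varepsilon)$, moving them strictly closer while preserving $\alpha+\beta$. Because the law of $a^Tu$ is invariant under permuting the coordinates of $a$ (the $u_i$ are i.i.d.), it suffices to prove that a single such transfer does not increase $\Prob(|a^Tu|>t)$ for any $t$. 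Writing the two affected coordinates as $1,2$ and conditioning on $c=\sum_{i\geq 3}a_iu_i$, which is symmetric about $0$ and independent of $(u_1,u_2)$, the task becomes a two-coordinate convolution statement: with $s=\alpha+\beta$ held fixed, the distribution of $(\alpha u_1+\beta u_2)+c$ becomes more peaked about $0$ as $|\alpha-\beta|$ decreases.

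I would establish this from two ingredients. First, an elementary computation: for $u_1,u_2\sim[-1,1]$ the density of $\alpha u_1+\beta u_2$ is a symmetric trapezoid supported on $[-s,s]$ with peak height $1/(2\max(\alpha,\beta))$, so as $|\alpha-\beta|$ shrinks the density rises in the middle and falls near $\pm s$; its single crossing against the less-equal density forces the peakedness comparison, and each such density, being a convolution of two symmetric log-concave densities, is itself symmetric and log-concave. Second, and this is the crux, I need that peakedness about $0$ is \emph{preserved under convolution} with the symmetric density of $c$: if $g_1$ is more peaked than $g_2$ about $0$, both symmetric, and $h$ is symmetric and log-concave, then $g_1*h$ is more peaked than $g_2*h$. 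This is precisely where the uniform (hence log-concave) law is used, and it is the main obstacle. The route I would follow is the variation-diminishing property of totally positive kernels: a symmetric log-concave $h$ is a P\'olya frequency kernel of order two, and convolving against it cannot increase the number of sign changes of $g_1-g_2$, which yields the required tail comparison of $g_1*h$ and $g_2*h$. This convolution--peakedness lemma is classical (Proschan); granting it, averaging the two-coordinate statement over $c$ completes the single-transfer step, and chaining the finitely many transfers gives $\Prob(|e_m^Tu|>t)\leq\Prob(|a^Tu|>t)$ for all $t$, hence both inequalities of the lemma.
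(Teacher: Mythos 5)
Your proposal cannot be checked against an internal argument, because the paper does not prove this lemma at all: it is imported verbatim from \cite{WWWS86} (``Below is a lemma from \cite{WWWS86}''). Judged on its own merits, your architecture is correct, and it is essentially Proschan's majorization--peakedness theorem (F.~Proschan, \emph{Peakedness of distributions of convex combinations}, Ann.\ Math.\ Statist.\ 36 (1965), 1703--1706) specialized to uniform variables, which is the classical result underlying the cited source. The reduction to $a_i\geq 0$ by symmetry of each $u_i$; the observation that every nonnegative $a$ with $\|a\|_1=m$ majorizes $e_m$, hence is connected to $e_m$ by finitely many Robin Hood transfers; the single-crossing comparison of the two trapezoidal densities for one transfer (both supported on $[-s,s]$, the more balanced pair giving the higher plateau and the shallower ramp); the preservation of peakedness under convolution with the symmetric law of $c=\sum_{i\geq 3}a_iu_i$; and the final passage from ``$\Prob(|e_m^Tu|>t)\leq\Prob(|a^Tu|>t)$ for all $t$'' to $\E(\varphi(|e_m^Tu|))\leq\E(\varphi(|a^Tu|))$ by stochastic dominance---all of this is sound, proves both halves of the lemma at once, and does strictly more than the paper, which simply cites the result.

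The one step needing repair is your sketch of the convolution--peakedness lemma itself. The variation-diminishing property of a ${\rm PF}_2$ (log-concave) kernel controls functions with \emph{at most one} sign change, but the density difference $g_1-g_2$ of two symmetric densities with $g_1$ more peaked has sign pattern $-,+,-$, i.e.\ two sign changes, so applying the variation-diminishing property to $g_1-g_2$ as stated does not close the argument. Two standard fixes. (i) Apply it instead to the difference of distribution functions $G_1-G_2$: by symmetry, peakedness of $g_1$ over $g_2$ is equivalent to $G_1-G_2\leq 0$ on $(-\infty,0]$ and $\geq 0$ on $[0,\infty)$, a single sign change; convolution with log-concave $h$ preserves this pattern, and since the convolved laws are again symmetric their CDF difference is an odd function, which forces the single crossing back to the origin---exactly the peakedness of $g_1*h$ over $g_2*h$. (ii) Avoid total positivity entirely: write $\Prob(|X_i+c|\leq t)=\E\left[\psi_t(X_i)\right]$ with $\psi_t(x)=\Prob(|c+x|\leq t)$; the function $\psi_t$ is symmetric and nonincreasing in $|x|$ as soon as $c$ has a symmetric \emph{unimodal} density, and then $\E[\psi_t(X_1)]\geq\E[\psi_t(X_2)]$ follows immediately from the stochastic dominance of $|X_2|$ over $|X_1|$, i.e.\ from the two-coordinate peakedness you already established. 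Route (ii) is Birnbaum's original argument, needs less than log-concavity, and makes the whole proof self-contained; with either repair your proof is complete.
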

\subsection{Proof of Theorem \ref{main2}}
\begin{proof}
Denote by $f_j(x)$ the $j$th entry of $f(x)$. Denote by $\nabla
f_j(x)$ the gradient of $f_j$. By Taylor Expansion,
\begin{equation}
f_j(x')=f_j(x)+(x'-x)^T\nabla f_j(x)+O(\|x'-x\|^2).\label{eq34}
\end{equation}
Combining the definition of $\C(x)$ and equation (\ref{eq34}),
\begin{eqnarray*}
\C_j(x) &=& \lim_{\delta\rightarrow 0}\sup_{x'\in\CP(x,
\delta)}\frac{|(x'-x)^T\nabla f_j(x)|}{\delta|f_j(x)|} .
\end{eqnarray*}
For $i=1, ..., m$, let
$$
g_i =x_i\times\mbox{the $i$th component of } \nabla f_j(x).
$$
Then, by the definition of $\CP(x, \delta)$, 
\begin{eqnarray}
\C_j(x) &=& \sup_{u\in[-1,1]^m}\frac{|u^Tg|}{|f_j(x)|}=
\frac{\|g\|_1}{|f_j(x)|}.\label{eq37}
\end{eqnarray}
Combining the definition of $\CA(x)$, $\CP(x, \delta)$ and $g$ and
equation (\ref{eq34}),
\begin{eqnarray}
\CA_j(x)&=& \lim_{\delta\rightarrow 0}\underset{x'\sim\CP(x,
\delta)}{\E}\left| \frac{(x'-x)^T\nabla f_j(x)}{\delta
\,f_j(x)}\right|= \underset{u\sim[-1,1]^m}{\E}\left|
\frac{u^Tg}{f_j(x)}\right|.\qquad\\
\frac{\CA_j(x)}{\C_j(x)}&=& \underset{u\in[-1,1]^m}{\E}\left(
\frac{|u^Tg|}{\|g\|_1}\right)\qquad\mbox{by equation
(\ref{eq37}}).\label{eq35}
\end{eqnarray}
Obviously,
\begin{eqnarray}
\underset{u\in[-1,1]^m}{\E}|u^Tg|
&=&\underset{u\in[-1,1]^m}{\E}|u_1g_1+...+u_mg_m|\\
&\leq&\underset{u\in[-1,1]^m}{\E}\left(|u_1g_1|+...+|u_mg_m|\right)\\
&=&|g_1|\underset{u\in[-1,1]^m}{\E}|u_1|+...+|g_m|\underset{u\in[-1,1]^m}{\E}|u_m|\\
&=&0.5\left(|g_1|+...+|g_m|\right) = 0.5\|g\|_1.\label{eq36}
\end{eqnarray}
Combining equations (\ref{eq35}) and (\ref{eq36}),
$\frac{\CA_j(x)}{\C_j(x)}\leq \frac{1}{2}.$ Since $\log(\cdot)$ is a
concave function, $\LCA_j(x)-\log C_j(x)\leq -1$. Combining the
definition of $\LCA_j(x)$, $\CP(x, \delta)$ and $g$ and equation
(\ref{eq34}),
\begin{eqnarray*}
&&\frac{\LCA_j(x)}{\log e} \, =\,\,  \lim_{\delta\rightarrow
0}\underset{x'\sim\CP(x, \delta)}{\E}\ln\left| \frac{(x'-x)^T\nabla
f_j(x)}{\delta
\,f_j(x)}\right|\\
&=&
\ln\frac{\|g\|_1}{m|f_j(x)|}+\underset{u\sim[-1,1]^m}{\E}\ln\left|
u^T\frac{mg}{\|g\|_1}\right|\\
&\geq&
\ln\frac{\|g\|_1}{m|f_j(x)|}+\underset{u\sim[-1,1]^m}{\E}\ln\left|
u^Te\right|\qquad\mbox{ by lemma \ref{l6}}\\
&>& \ln\frac{\|g\|_1}{|f_j(x)|}-\frac{\ln (m-1)}{2}-\frac{\ln
3}{2}-1- \varepsilon_{m}\qquad\mbox{ by corollary
\ref{co2}}\\
&=& \ln\C_j(x)-\frac{\ln (m-1)}{2}-\frac{\ln 3}{2}-1-
\varepsilon_{m}\qquad\mbox{by equation (\ref{eq37}).}
\end{eqnarray*}
That is, $ \LCA_j(x) > \log\C_j(x)-\frac{\log (m-1)}{2}-\frac{\log
3}{2}- (1+\varepsilon_{m})\log e.$ Since $\log(\cdot)$ is a concave
function, $\CA_j(x) >  \frac{\C_j(x)}{\sqrt{
3(m-1)}}e^{-(1+\varepsilon_{m})}$.
\end{proof}

\end{document}